\newtheorem{theorem}{Theorem}
\newtheorem{proposition}[theorem]{Proposition}
\def\RR{{\mathbb R}}
\def\EE{{\mathbb E}}
\def\PP{{\mathbb P}}
\renewcommand{\emptyset}{\varnothing}
\DeclareMathOperator*{\circu}{circ}
\DeclareMathOperator*{\argmax}{arg \ max}
\title{Dynamic Relaxations for Online Bipartite Matching}
\author{Alfredo Torrico \qquad Alejandro Toriello\\
\small{H.\ Milton Stewart School of Industrial and Systems Engineering}\\
\small{Georgia Institute of Technology}\\
\small{Atlanta, Georgia 30332}\\
\small{atorrico3 at gatech dot edu, atoriello at isye dot gatech dot edu}
}
\begin{document}

\maketitle

\begin{abstract}
\emph{Online bipartite matching} (OBM) is a fundamental model underpinning many important applications, including search engine advertisement, website banner and pop-up ads, and ride-hailing. We study the i.i.d.\ OBM problem, where one side of the bipartition is fixed and known in advance, while nodes from the other side appear sequentially as i.i.d.\ realizations of an underlying distribution, and must immediately be matched or discarded. We introduce \emph{dynamic} relaxations of the set of achievable matching probabilities, show how they theoretically dominate lower-dimensional, \emph{static} relaxations from previous work, and perform a polyhedral study to theoretically examine the new relaxations' strength. We also discuss how to derive heuristic policies from the relaxations' dual prices, in a similar fashion to dynamic resource prices used in network revenue management. We finally present a computational study to demonstrate the empirical quality of the new relaxations and policies.
\end{abstract}

\section{Introduction}

Many important and emerging applications in e-commerce and in the internet more generally can be modeled as online two-sided markets, with buyers and sellers dynamically appearing and conducting transactions. When a platform or other entity controls or manages one side of this market (usually the supply) and can choose what product to offer to dynamically appearing buyers, the system in question can be modeled as an \emph{online bipartite matching} (OBM) problem. 
As more services move to online platforms in the coming years, the ubiquity and importance of OBM models will only increase. 

An important application of OBM and its generalizations is in the rapidly growing sector of digital advertisement; in their \emph{US Ad Spending Estimates and Forecast for 2017}, eMarketer reports that digital ad spending reached \$83 billion last year, an almost 16\% year-over-year increase. Within digital marketing, search engine advertisement yields one application of OBM and similar models \cite{mehta13,mehta_etal07}: Users input search terms, and the engine displays ad links in addition to the actual search results. The engine chooses the ad(s) to display (i.e.\ matches an ad to a user) based on the search term, user information, and advertisers' preferences and budget, with one typical objective being to maximize the expected revenue collected from advertisers. Similarly, OBM models can be applied to website banner and pop-up ads; here, each time a user loads a website, the site manager can choose ad(s) to display based on the user's information and browsing history as well as the advertisers' budget and target market.

OBM also finds applications in ride-hailing \cite{raghvendra16}, another rapidly growing sector -- one study by Goldman Sachs in 2017 projected that global revenues in the industry could reach \$285 billion by 2030\footnote{C.\ Huston. ``Ride-hailing industry expected to grow eightfold to \$285 billion by 2030.'' \emph{MarketWatch}, published May 27, 2017.}. Within these systems, when a user requests a ride, the ride-hailing platform must match them to an available driver, with the overall goal of maximizing some measure of customer satisfaction or utility (for example, by minimizing users' average waiting time before a pickup).

As in classical deterministic bipartite matching, OBM involves matching nodes on opposite sides of a bipartite graph, with the objective of maximizing the cardinality of the matching or a more general weight function. In online versions of the problem, nodes on one or both sides of the bipartition may appear and/or disappear dynamically, matches are often irrevocable, and decisions must usually be made with only partial information about the underlying graph. In the version of OBM we study here, one side of the bipartition is fixed and known, representing the goods or resources the platform can offer; the nodes from the other side, representing customers, arrive sequentially, one at a time. Upon each arrival, the platform must immediately and irrevocably match the arriving node to a remaining compatible node from the other side, or discard it. We assume arriving nodes are i.i.d.\ draws from an underlying uniform distribution over possible node ``types'', representing customer classes that may or may not be compatible with different resources or goods. For example, in search engine advertisement, advertisers indicate which search terms they wish their ads displayed with, and the search engine can only match ads with terms in these classes. The i.i.d.\ assumption implies this model is applicable in situations where the platform can forecast customer behavior, e.g.\ based on past arrival data, and where this behavior is relatively stable over time. The model may not be as applicable in data-poor situations where the platform cannot confidently forecast customer behavior; the literature includes more conservative models for such cases \cite{mehta13}, culminating with the adversarial model studied in \cite{karp_etal90}. Conversely, if customer behavior can be forecast but is not necessarily stable over time, the assumption of identical distributions may be problematic. While we are not aware of OBM models for this case, the revenue management literature includes many works in this vein, particularly in network revenue management, e.g.\ \cite{talluri_van_ryzin98}. We briefly survey related work below.

Perhaps because of its applicability in search engine advertisement, the algorithms community has extensively studied i.i.d.\ OBM and related models for the past decade, starting with \cite{feldman_etal09}. For the i.i.d.\ variant with cardinality objective, this work typically focuses on developing heuristic matching policies with multiplicative worst-case performance guarantees. It is straightforward to see that a simple policy based on solving a max-flow relaxation achieves at least $ 1 - 1/e \approx 0.63 $ of the optimal policy, and this ratio in fact matches the best possible competitive ratio in the adversarial case \cite{karp_etal90}; \cite{feldman_etal09} established that in the less conservative i.i.d.\ model a better ratio is indeed possible. Currently, the best guarantee of this type is roughly $ 0.71 $ \cite{jaillet_lu14} (and it is slightly better under additional assumptions).

The analysis of these heuristic policies and their worst-case guarantees relies on simple linear programming (LP) relaxations, often with network flow structure. However, there is comparably far less work focusing directly on the derivation of strong relaxations for i.i.d.\ OBM, even though these relaxations provide dual upper bounds useful for benchmarking any new heuristic policies and often can be employed in policy design as well. The only work along these lines we are aware are of is  \cite{torrico_etal18}, which builds on the network flow relaxations used in \cite{feldman_etal09}, and subsequent papers, by adding more sophisticated valid inequalities derived from probabilistic arguments. Though somewhat unusual in the algorithms or optimization literature, this approach can be interpreted as a version of \emph{achievable region} techniques from queueing theory and applied probability; see e.g.\ \cite{bertsimas_nino-mora96,coffman_mitrani80}.

\subsection{Contribution}

Starting with \cite{feldman_etal09}, to the best of our knowledge all known relaxations for i.i.d.\ OBM are ``static'': Although the process occurs dynamically over a horizon with sequential decision epochs, the relaxations use as their primary variables the probability that an arriving customer node of some type is ever matched to a fixed resource node. Though this reduces the number of variables to consider, it also means the corresponding relaxations are coarser and looser, as they cannot easily capture the model's dynamics. Furthermore, with few exceptions, the policies derived from such relaxations are also mostly static in nature; that is, though a decision may depend on the arriving node type and the remaining available resource nodes, it usually does not depend on the decision epoch itself and how far or close it might be from the end of the horizon.

Our main contribution is to explicitly account for the problem's sequential nature and consider dynamic relaxations. Specifically, these relaxations use as decision variables the probability that a particular match occurs \emph{in a particular stage}. Using these time-indexed probabilities affords several modeling advantages, such as allowing us to include edge weights that vary by time and thus simplify the analysis by capturing all compatibility information in the objective.

Of course, the primary appeal of dynamic relaxations is the possibility of providing tighter dual bounds for the model. As one of our main results, we establish that our simplest dynamic relaxation is provably at least as tight as the best-performing relaxation from \cite{torrico_etal18}; furthermore, the latter relaxation includes exponentially many inequalities and relies on a separation algorithm, whereas our new relaxation has polynomially many variables and constraints. To further understand our new relaxation, we also perform a polyhedral study, demonstrating that all of its inequalities are facet-defining for the underlying polytope of achievable probabilities. We then extend this polyhedral study and introduce more complex inequalities, all facet-defining as well. Our empirical study verifies the strength of the new relaxation; it improves the previous best gaps by $4\%$ to $5\%$ in absolute terms on average.

As a secondary contribution, we also show how our new relaxation can be leveraged to construct a dynamic heuristic policy. Although this kind of policy is new in OBM to our knowledge, our policy can be viewed as the OBM analogue to dynamic bid price policies, introduced in \cite{adelman07} for network revenue management. To design the policy, we establish a connection between our relaxation and a value function approximation of the model's dynamic programming (DP) formulation. Our empirical results also verify the new policy's quality in comparison to the best empirically performing policies from the literature.

The remainder of the paper is organized in the following way: After a brief literature survey at the end of this section, Section \ref{sec:desc} formulates the problem, and summarizes pertinent previous results. Section \ref{sec:relax} introduces our relaxations and gives our theoretical results, while Section \ref{sec:exp} outlines our computational study. Section \ref{sec:conc} concludes and discusses possible future work. An Appendix has mathematical proofs not included in the body of the paper.

\subsection{Literature Review}\label{sec:related_work}

The OBM model was introduced by \cite{karp_etal90}, who studied the adversarial version in which node arrivals are not governed by a distribution, but rather by an adversary whose objective is to maximize the difference between the cardinality of the decision maker's matching and the offline optimum. The authors show that a randomized \emph{ranking} algorithm that chooses a random permutation of the resource nodes and matches the highest-ranked available and compatible node according to the permutation yields an optimal competitive ratio of $ 1 - 1/e $; see e.g.\ \cite{birnbaum_mathieu08} for a simplified and corrected proof.

Most work on OBM since has focused on less conservative variants. The i.i.d.\ version with cardinality objective was first studied in \cite{feldman_etal09}, who showed that in this version the performance guarantee could be strictly better than $ 1 - 1/e $; they also used a network flow relaxation to design their \emph{two suggested matchings} policy. Subsequent work has focused on improving the performance guarantee and/or generalizing the objective, and typically relies on LP relaxations with network flow structure, e.g.\ \cite{bahmani_kapralov10,brubach_etal17,haeupler_etal11,manshadi_etal12}. For the cardinality case, \cite{jaillet_lu14} have the current state of the art, a policy based on a max-flow relaxation with a guarantee of approximately $ 0.71 $.

The i.i.d.\ version of OBM (sometimes also called the \emph{known i.i.d.}\ model) is in some sense the least conservative OBM variant, compared to the most conservative adversarial version. Some authors have studied models that compromise between the two. For example, in the \emph{random permutation} model an adversary chooses the graph, but the arriving nodes are revealed in a random order not controlled by the adversary. With this slight relaxation of the adversarial framework, \cite{goel_mehta08} show that for the cardinality objective a simple greedy algorithm, which matches an arriving node to any remaining compatible node, achieves a competitive ratio of $1-1/e$; later improvements came in \cite{karande_etal11,mahdian_yan11}. Other models, variants and extensions have appeared in the algorithms literature; we refer the reader to the survey \cite{mehta13}.

While the notion of dynamic relaxations appears to be new in the OBM context, there is a stream of related literature in network revenue management, beginning with \cite{adelman07}, who introduced \emph{dynamic bid relaxations} and their corresponding policies. In this literature, the goal is often to show that a particular relaxation can be computed efficiently, e.g.\ \cite{kunnumkal_talluri16,tong_topaloglu14,vossen_zhang15}, as a naive formulation involves a separation problem solved via an integer program. These dynamic relaxations have also been extended to customer choice models, e.g.\ \cite{vossen_zhang15,zhang_adelman09}.

\section{Model Description and Preliminaries}\label{sec:desc}

The OBM model is formulated using two finite disjoint sets $N$ and $V$, with the process occurring dynamically in the following way. The \emph{right-hand} node set $V$, with $ \lvert V \rvert =m$, is known and given ahead of time. 
The \emph{left-hand} set $N$ with $ \lvert N \rvert = n $ represents different node types that may appear, but we do not know which ones will appear and how often. We know only that $ T $ left-hand nodes in total will appear sequentially, each one drawn independently from the uniform distribution over node types $ N $. That is, at each epoch a node from one of the types in $N$ appears with probability $1/n$ and must be immediately (and irrevocably) matched to a remaining available node in $V$ or discarded; two or more nodes from the same type may appear throughout the process, each treated as a separate copy.
Matching $ i \in N $ to $ j \in V $ in stage $ t $ yields a (known) reward or weight $ w_{ij}^t $, and the objective is to maximize the expected weight of the matching. Following convention from previous literature and the motivating application of search engine advertisement, we call $i\in N$ an \emph{impression}, and each $j\in V$ an \emph{ad}. 

By considering time-indexed weights $w_{ij}^t$, we generalize much of the existing literature and can avoid dealing with specific graph structure. 
In particular, we may assume that the process occurs in a complete bipartite graph, i.e.\ every node type in $N$ is connected or compatible with every node in $V$; non-existent edges simply get weight zero. 

Moreover, we can assume $m=n=T$ without loss of generality. Indeed, if $m<T$ we add dummy nodes to $V$ and assign zero weight to all corresponding edges. Similarly, if $m>T$ we  increase the number of  stages  and give zero weight to all edges in the new stages. 
If $n>m = T$, we again add dummy nodes and stages.
Finally, if $n< m = T$ we  make $\kappa$ copies of every node type in $N$ (and the corresponding edges) for the smallest $ \kappa $ with $ \kappa n \geq m$, then proceed as before. To ease notation, in the remainder of the paper we  write $n$ for $m$ and $T$, but we use the indices $i$ for impressions, $j$ for ads, and $t$ for stages. We use the shorthand $ [n] := \{1, \dotsc, n\} $, and identify singleton sets with their unique element.

\subsection{DP and LP Formulations}
Let $\eta$ be the random variable with uniform distribution over $N$. We count stages down from $n$, meaning stage $t$ occurs when $t$ decision epochs (including the current one) remain in the process. We can now give a DP formulation for this OBM model. Let $v^*_t(i,S)$ denote the optimal expected value given that $i\in N$ appears in stage $t$ when the set of ads $S\subseteq V$ is available. Then, for all $t=1,\dotsc,n $, $ i\in N $ and $ S\subseteq V$,
\begin{align}\label{eq:DP}
v^*_t(i,S) & = \max
\begin{cases}
\max_{j\in S}\{w^t_{ij}+\EE_{\eta}[v^*_{t-1}(\eta,S\backslash j)]\} \\
\EE_{\eta}[v^*_{t-1}(\eta,S)],
\end{cases}  
\end{align}
where $v_0^*(\cdot,\cdot)$ is identically zero, and the optimal expected value of the model is given by $\EE_{\eta}[v^*_{n}(\eta,V)] = 1/n \sum_{i \in N} v_n^*(i, V)$. The first term in this recursion corresponds to matching $i$ with one of the remaining ads $j\in S$; the second corresponds to discarding $i$. As with any DP, the optimal value function $v^*$ induces an optimal policy: At any state $(t,i,S)$, we choose an action that attains the maximum in \eqref{eq:DP}. 

Using a standard reformulation (see e.g.\ \cite{puterman05}), we can capture the recursion \eqref{eq:DP} with the linear program
\begin{subequations}\label{eq:obm_lp_val}
\begin{align}
\min_{ v \geq 0 } ~ & \EE_{\eta}[v_n(\eta,V)] \label{eq:obm_lp_valobj} \\
\text{s.t.\ } & v_{t}(i,S\cup j)-\EE_{\eta}[v_{t-1}(\eta,S)]\geq w^t_{ij}, & & t\in[n], \ i\in N, \ j\in V, \ S\subseteq V\backslash j \label{eq:obm_lp_valx}\\
& v_{t}(i,S)-\EE_{\eta}[v_{t-1}(\eta,S)]\geq 0,  & & t\in[n], \ i\in N, \ S\subseteq V .  \label{eq:obm_lp_valy}
\end{align}
\end{subequations}
The value function $v^*$ defined in  (\ref{eq:DP}) is optimal for \eqref{eq:obm_lp_val}. Moreover, this LP is a strong dual for OBM; any feasible $v$ has an objective greater than or equal to $\EE_{\eta}[v^*_{n}(\eta,V)]$. The dual of \eqref{eq:obm_lp_val} is a  primal formulation where any feasible solution encodes a feasible policy and its probability of choosing any action from any state in the DP. That formulation is the LP
\begin{subequations}\label{eq:obm_lp_pol}
\begin{align}
\max_{x,y \geq 0} & \quad   \sum_{i\in N} \sum_{j\in V} \sum_{t\in [n]} \sum_{S\subseteq V\backslash j} w^t_{ij}x_{i,j}^{t,S}\\
\text{s.t.\ } & \sum_{j\in V}x_{i,j}^{n,V\backslash j}+y_i^{n,V}\leq \frac{1}{n}, \quad i\in N, \label{eq:obm_lp_pol1} \\
\begin{split}
 & \sum_{j\in S}x_{i,j}^{t,S\backslash j}  +y_i^{t,S}\cdot1_{\{t\neq 1\}}-\frac{1}{n}\cdot1_{\{|S|> t\}}\cdot\sum_{k\in N}y_{k}^{t+1,S} \\
& \qquad -\frac{1}{n}\sum_{k\in N}\sum_{j\in V\backslash S}x_{k,j}^{t+1,S}\leq0,   \qquad t\in[n-1], \ i\in N, \ \emptyset\neq S\subset V, \ |S|\geq t,
\end{split} \\
& \sum_{j\in V} x_{i,j}^{t,V\backslash j} +y_i^{t,V}\cdot1_{\{t\neq1\}} -\frac{1}{n}\sum_{k\in N}y_k^{t+1,V}\leq 0, \qquad  i\in N, \ t\in[n-1]. \label{eq:obm_lp_pol2} 
\end{align}
\end{subequations}
We denote by $1_{\mathcal{A}}$ the indicator function for a condition $\mathcal{A}$, which takes value one if condition $\mathcal{A}$ is satisfied, zero otherwise. Decision variable $ x_{i,j}^{t,S} $ 
represents the probability that the policy chooses to match impression $ i $ to ad $ j $ in state $ ( t, i, S \cup j ) $, and $ y_i^{t,S} $
similarly represents a discarding action.

As with its dual, \eqref{eq:obm_lp_pol} has exponentially many variables and constraints, and is therefore difficult to analyze directly. However, we can equivalently consider the probability that a feasible policy makes a particular match between $ i $ and $ j $ in stage $ t $ without tracking the other remaining ads $ S \subseteq V \setminus j $; this corresponds to optimizing over a projection of the feasible region of \eqref{eq:obm_lp_pol},
\begin{align*}
\max \Biggl\{ \sum_{ i \in N } \sum_{ j \in V}\sum_{t\in[n]} w^t_{ij}z^t_{ij} : \exists ~ (x, y) \geq 0 \text{ satisfying } \eqref{eq:obm_lp_pol1}\text{--}\eqref{eq:obm_lp_pol2} \text{ with } z^t_{ij} =\sum_{S\subseteq V\backslash j}x_{i,j}^{t,S} \Biggr\} ,
\end{align*}
where $ z^t_{ij} $ is the probability that impression $ i $ is matched to ad $ j $ in stage $ t $. Any such $ z $ is a vector of matching probabilities that is \emph{achievable} by at least one feasible policy. Let $ Q $ denote this projected polyhedron in the space of $ z_{ij}^t $ variables, and note that $ Q $ is full-dimensional in $ \RR^{n^3}$. Optimizing over $Q$ is as difficult as solving the original DP formulation \eqref{eq:DP}, but optimizing over any relaxation of $Q$ yields a valid upper bound; this is our main goal.

\subsection{Relevant Previous Work}

Most previous results concerning relaxations for OBM use a lower-dimensional projection of the feasible region of \eqref{eq:obm_lp_pol}. Specifically, assuming edge weights are static across stages, $ w_{ij}^t = w_{ij} $ for $ t \in [n] $, consider
\begin{align*}
\max \Biggl\{ \sum_{ i \in N } \sum_{ j \in V} w_{ij} z_{ij} : \exists ~ (x, y) \geq 0 \text{ satisfying } \eqref{eq:obm_lp_pol1}\text{--}\eqref{eq:obm_lp_pol2} \text{ with } z_{ij} =\sum_{t\in[n]}\sum_{S\subseteq V\backslash j}x_{i,j}^{t,S} \Biggr\} ,
\end{align*}
where $ z_{ij} $ is the probability that impression $ i $ is ever matched to ad $ j $. Let $ Q' $ denote this projected polyhedron in the space of $z_{ij}$ variables, and observe that $ Q' $ is also a projection of $Q$ via $z_{ij}=\sum_{t\in[n]}z_{ij}^t$. The following max flow (or deterministic bipartite matching) LP is known to be a relaxation of $Q'$ and has been used to study $Q'$ in several works starting with \cite{feldman_etal09}: 
\begin{subequations}\label{eq:obm_relax_prev}
\begin{align}
\max_{z \geq 0} ~ & \sum_{i \in N} \sum_{j \in V} w_{ij} z_{ij} \\
\text{s.t.\ } & \sum_{j\in V}z_{ij}\leq T/n=1, \qquad i\in N \label{eq:obm_relax_prev_icap} \\
& \sum_{i\in N}z_{ij}\leq 1, \qquad j\in V . \label{eq:obm_relax_prev_jcap} 
\end{align}
\end{subequations}
In this relaxation, constraints \eqref{eq:obm_relax_prev_icap} limit the expected number of times an impression type can be matched to $ T/n = 1 $, the expected number of times it will appear, while \eqref{eq:obm_relax_prev_jcap} state that each ad is matched at most once. 

To our knowledge, the only past work that specifically focuses on polyhedral relaxations of $Q'$ is \cite{torrico_etal18}, which presents several classes of valid inequalities, including the \emph{right-star} inequalities,
\begin{align}\label{eq:right-star_prev}
\sum_{i\in I}z_{ij}\leq 1- (1- \lvert I \rvert /n )^n, \qquad j\in V, \ I\subseteq N,
\end{align}
which yield the best empirical bounds when added to \eqref{eq:obm_relax_prev}.
Although exponential in number, these inequalities can be separated over in polynomial time by a simple greedy algorithm. We use the bound given by \eqref{eq:obm_relax_prev} with \eqref{eq:right-star_prev} as a theoretical and empirical benchmark to test our new relaxations.

\section{Dynamic Relaxations}\label{sec:relax}

We introduce various classes of valid inequalities for $Q$ and study their facial dimension. These inequalities always include variables corresponding to complete bipartite subgraphs; therefore, to ease notation we define
$$Z_{I,J}^{t} := \sum_{i\in I}\sum_{j\in J}z_{ij}^{t}, \qquad I \subseteq N, J \subseteq V .$$
We begin by presenting a simple inequality class to motivate our approach. For an impression $i\in N$, the probability of matching $i$ in each stage $t\in[n]$ is at most $1/n$; this corresponds to
\begin{equation}\label{eq:prob_bound}
Z_{i,V}^t\leq 1/n, \qquad  \ i\in N, \ t\in [n].
\end{equation}
Note that by summing these constraints over all $ t $ for a fixed $ i $, we obtain \eqref{eq:obm_relax_prev_icap}.
\begin{proposition}\label{prop:facetProbBoundExt}
Constraints \eqref{eq:prob_bound} are facet-defining for the polyhedron of achievable probabilities $Q$.
\end{proposition}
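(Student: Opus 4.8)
The plan is to use the standard facet criterion for full-dimensional polyhedra. Since $Q$ is full-dimensional in $\RR^{n^3}$, the valid inequality $Z_{i,V}^t \le 1/n$ defines a facet if and only if every linear equation $\sum_{k,j,s} c_{kj}^s z_{kj}^s = d$ that holds on the whole face $F := \{z \in Q : Z_{i,V}^t = 1/n\}$ is a scalar multiple of the equation $\sum_{j \in V} z_{ij}^t = 1/n$. Validity is immediate and is the starting point: matching $i$ to any ad in stage $t$ requires $i$ to appear in stage $t$, an event of probability $1/n$, and the matching events over distinct ads $j$ are disjoint, so $Z_{i,V}^t \le 1/n$ for every achievable $z$. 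Thus I fix an arbitrary $(c,d)$ with $F \subseteq \{c \cdot z = d\}$ and aim to show $c_{kj}^s = 0$ whenever $(k,s) \ne (i,t)$, that the coefficients $c_{ij}^t$ share a common value $\lambda$, and that $d = \lambda/n$.

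The engine of the proof is a family of deliberately \emph{lazy} policies that discard every impression except for one or two targeted matches, so that their achievable vectors can be written down by hand and, crucially, each extra match perturbs only a single coordinate of $z$. For a target coordinate $(k,j,s)$ with $(k,s) \ne (i,t)$, fix an ad $j^* \ne j$ and let $L$ be the policy that matches $i$ to $j^*$ whenever $i$ appears in stage $t$ and otherwise discards; then $z^L$ equals $1/n$ in coordinate $(i,j^*,t)$ and $0$ elsewhere, so $z^L \in F$. Now augment $L$ by additionally matching $k$ to $j$ (with some probability $\theta$) whenever $k$ appears in stage $s$. Because the baseline never touches any ad other than $j^*$, the target ad $j$ is available at stage $s$ with a fixed probability, the stage-$t$ match $i \to j^*$ is never disturbed (so the augmented policy remains in $F$), and its vector equals $z^L + \alpha e_{kj}^s$ for some $\alpha > 0$, where $e_{kj}^s$ is the corresponding unit vector. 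Comparing this point with $z^L$ through $c \cdot z = d$ yields $c_{kj}^s = 0$, for every $(k,s) \ne (i,t)$. Finally, the baselines $L_{j_1}$ that instead match $i \to j_1$ in stage $t$ satisfy $L_{j_1} \in F$ with value $1/n$ in coordinate $(i,j_1,t)$; comparing $L_{j_0}$ with $L_{j_1}$ forces $c_{i j_0}^t = c_{i j_1}^t$, so all $c_{ij}^t$ equal a common value $\lambda$, and evaluating at any such baseline gives $d = \lambda/n$. Hence $(c,d) = \lambda(a, 1/n)$, where $a$ is the $0/1$ coefficient vector of $Z_{i,V}^t$; equivalently, the directions realized by these modifications span the hyperplane $\{\delta : \sum_{j} \delta_{ij}^t = 0\}$, of dimension $n^3 - 1$.

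The step I expect to demand the most care is verifying that each augmented policy really changes exactly one coordinate and stays on $F$ — that is, controlling the downstream effect of an extra match on ad availability in later stages. In general such modifications cascade and perturb many coordinates at once, which would destroy the clean deductions above; the role of the all-discard baseline is precisely to neutralize these interactions, since with at most two matches ever made (one at $j^*$ in stage $t$ and one at $j$ in stage $s$) the availability of every relevant ad is either deterministic or depends only on whether $i$ appeared in stage $t$. I would therefore organize this verification into the cases $s > t$, $s < t$, and $s = t$ (and, within the last, $k \ne i$), checking in each that the choice $j^* \ne j$ keeps $i \to j^*$ available at stage $t$ and that $j$ is available at stage $s$ with the claimed probability, so that the induced change in $z$ is exactly $\alpha e_{kj}^s$.
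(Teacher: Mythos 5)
Your proof is correct, and it is built from essentially the same policies as the paper's: ``lazy'' policies that make at most two non-interfering matches, one pinning the face equality $Z_{i,V}^t = 1/n$ and the other isolating a single coordinate. The difference is the facet certificate. The paper uses the direct method: it exhibits $n^3$ points on the face, namely $\frac{1}{n}e_{i,j}^t$; then $\frac{1}{n}e_{k,j}^\tau + \frac{1}{n}e_{i,\ell}^t$ for $\tau\neq t$ and $\ell\neq j$; and $\frac{1}{n}e_{k,j}^t + \frac{1}{n}e_{i,\ell}^t$ for $k\neq i$ and $\ell\neq j$, and checks that these are linearly, hence affinely, independent. You use the indirect method: any equation $c\cdot z = d$ valid on the face must be a scalar multiple of the defining equation. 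For the full-dimensional polyhedron $Q$ the two criteria are equivalent, so the routes differ in bookkeeping rather than substance. What your version buys is that each comparison of two achievable vectors annihilates exactly one coefficient $c_{kj}^s$, so no linear-independence or determinant verification is ever needed; this payoff would be more pronounced for the paper's harder facet results (Theorems \ref{theo:ineq_Jsize_gen} and \ref{theo:ineq_IJ_gen}), whose direct proofs must establish nonsingularity of structured block matrices. What the paper's version buys is an explicit affine basis of the facet, which it then reuses as a template in those later proofs. Finally, the step you flag as delicate, controlling cascading effects of an extra match on later ad availability, is indeed the crux, and your resolution (choose $j^*\neq j$ so the two matches never compete for an ad, and note that when $s=t$ the appearances of $i$ and of $k\neq i$ are disjoint events) is exactly the paper's device of routing $i$ to some $\ell\neq j$.
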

\begin{proof}Fix $i\in N$ and $t\in [n]$. We use $e_{k,j}^\tau \in\RR^{n^3}$ to denote the canonical vector, i.e., a vector with a one in the coordinate $(k,j,\tau)$ and zero elsewhere, indicating that we match impression $k$ with ad $j$ in stage $\tau$. We construct the following $n^3$ affinely independent points corresponding to policies that satisfy \eqref{eq:prob_bound} with equality:
\begin{itemize}
\item Policy for $(i,j,t)$ with $j\in V$: If $i$ appears in stage $t$, which happens with probability $1/n$,  we match it with $j$. This corresponds to the point $\frac{1}{n}e_{i,j}^t$.
\item Policy for $(k,j,\tau)$ with $j\in V$, $\tau \neq t$, and $k\in N$: If $k$ appears in stage $\tau$ (with probability $1/n$), we match it to $ j $. Then, if $i$ appears in stage $t$ with probability $1/n$, we match it to some $\ell \in V$, $\ell\neq j$, so we have the point $\frac{1}{n}e_{k,j}^\tau +\frac{1}{n}e_{i,\ell}^t$.
\item Policy for $(k,j,t)$ with $j\in V$, and $k\neq i$: If $k$ appears in stage $t$ (with probability $1/n$), we match it to $j$. On the other hand, if $i$ appears in stage $t$ with probability $1/n$, we match it to some $\ell\in V$, $\ell \neq j$, so we have the point $\frac{1}{n}e_{k,j}^t+\frac{1}{n}e_{i,\ell}^t$.
\end{itemize}
These points are linearly independent, which implies they are affinely independent.
\end{proof}

We now introduce our general inequality family. Fix a set of ads $J\subseteq V$ and a family of impression sets $ I_t \subseteq N $, $ t \in [n] $. For any vector $\alpha\in\RR^n_+$, we have a valid inequality for $Q$ of the form
\begin{equation}\label{eq:dp_ineq}
\sum_{t=1}^n \alpha_t Z_{I_t,J}^t \leq R(\alpha, (I_t), J ), 
\end{equation}
where $R(\alpha, (I_t), J )$ defines the maximum of the left-hand side over $Q$. As one example, inequalities \eqref{eq:prob_bound} are a special case of \eqref{eq:dp_ineq} where $ J = V $, $ I_t = \{i\} $, $ \alpha_t = 1 $, and $ I_\tau = \emptyset $, $ \alpha_\tau = 0 $ for $ \tau \neq t $.
\begin{proposition}\label{prop:dp_ineq}
For $\alpha\in\RR^n_+$, set family $I_t\subseteq N$, $ t \in [n] $, and $J\subseteq V$, constraints \eqref{eq:dp_ineq} are valid for the polyhedron of achievable probabilities $Q$, and $R(\alpha, (I_t), J )$ can be computed in polynomial time via a DP.
\end{proposition}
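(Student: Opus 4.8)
Validity of \eqref{eq:dp_ineq} is immediate, since $R(\alpha,(I_t),J)$ is \emph{defined} to be $\max_{z\in Q}\sum_{t=1}^n\alpha_t Z_{I_t,J}^t$; every $z\in Q$ therefore satisfies the inequality by construction. The substance of the proposition is thus the polynomial-time computation of this maximum. The plan is to recognize the optimization as an instance of the OBM model itself, under the modified time-varying weights $\tilde w_{ij}^t := \alpha_t\,1_{\{i\in I_t\}}1_{\{j\in J\}}$, so that $\sum_{t=1}^n\alpha_t Z_{I_t,J}^t=\sum_{i,j,t}\tilde w_{ij}^t z_{ij}^t$. Because optimizing a linear objective over $Q$ returns the best achievable expected weight, i.e.\ the optimal value of the DP \eqref{eq:DP} with weights $\tilde w$, it suffices to show that this particular DP can be solved efficiently despite its exponentially large, set-valued state space.

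The key step is to exploit the symmetry of $\tilde w$: every ad in $J$ carries identical weight and every ad outside $J$ is worthless. Concretely, I would prove by induction on $t$ that the optimal value function $v_t^*(i,S)$ of the modified DP depends on the available set $S$ only through the count $k:=|S\cap J|$ of remaining valuable ads. In the inductive step, matching the arriving impression $i$ to an ad $j\in J$ yields reward $\alpha_t 1_{\{i\in I_t\}}$ and sends the state to count $k-1$, whereas matching to any ad outside $J$ or discarding yields zero reward and leaves the count at $k$ (discarding being always available, this zero-reward option always exists); by the inductive hypothesis each continuation value depends on $S$ only through the resulting count, so $v_t^*(i,S)$ does as well. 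Writing $g_t(k)$ for this common value and $a_t:=|I_t|$, the recursion collapses to
\begin{equation*}
g_t(k)=\frac{1}{n}\Bigl[a_t\max\bigl\{\alpha_t 1_{\{k\ge1\}}+g_{t-1}((k-1)^+),\,g_{t-1}(k)\bigr\}+(n-a_t)\,g_{t-1}(k)\Bigr],
\end{equation*}
with $g_0\equiv 0$ and $R(\alpha,(I_t),J)=g_n(|J|)$.

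To justify discarding the dominated action that wastes a valuable ad on a non-rewarding match (which is why an arriving $i\notin I_t$ contributes only $g_{t-1}(k)$), I would also record the elementary monotonicity $g_{t-1}(k-1)\le g_{t-1}(k)$, established in the same induction. Everything else is bookkeeping: the reduced state space $(t,k)$ has size $(n+1)(|J|+1)=O(n^2)$, each transition costs $O(1)$ once the counts $a_t$ are precomputed, and evaluating $g_t$ for $t=0,1,\dots,n$ therefore runs in polynomial time. The crux of the argument—and the only step requiring genuine care—is the symmetry reduction: establishing rigorously that the set-valued state $S$ may be replaced by the scalar count $|S\cap J|$, which is precisely what brings the otherwise exponential DP into the polynomial range.
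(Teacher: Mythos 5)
Your proof is correct and takes essentially the same route as the paper: both reduce the computation of $R(\alpha,(I_t),J)$ to a polynomial-size DP whose state is the stage together with the number of remaining ads in $J$ (the paper additionally carries the arrival indicator $p_t$ in its state $R(t,d,p_t)$, which you instead average out in the recursion for $g_t(k)$ --- an immaterial difference). Your explicit induction showing the value function depends on $S$ only through $|S\cap J|$, plus the monotonicity $g_{t-1}(k-1)\le g_{t-1}(k)$ used to discard dominated matches, rigorously justifies the state collapse that the paper's proof asserts implicitly.
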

\begin{proof}
Define variables $p_t\in\{0,1\}$ to indicate whether a node from $I_t$ appears in stage $t$ or not, and denote by $d\in\{0,\ldots,|J|\}$ the number of remaining nodes from $J$. Given this, we can state a DP recursion using the value function $R(t,d,p_t)$, the expected value in stage $t$ when $d$ nodes from $J$ are available and $ p_t $ has occurred. For example, if only one stage remains, $d$ nodes from $J$ are available, and no element of $I_1$ appears, $R(1,d,0)=0$ since we cannot match any node in $ I_1 $. Conversely, $R(1,d,1)= \alpha_1 \min\{1, d \}$, since we can match a node and obtain value $ \alpha_1 $ as long as at least one element of $ J $ remains.

In general, if $d$ nodes are available in stage $t$ and no node from $I_t$ appears ($p_t=0$), then the expected value $R(t,d,0)$ can be computed recursively by conditioning on terms from stage $ t - 1 $: 
\[
R(t,d,0) = \frac{ n - \lvert I_t \rvert }{n} R( t-1, d, 0 ) + \frac{\lvert I_t \rvert}{n} R( t-1, d, 1 ).
\]
On the other hand, to compute $R(t,d,1)$ we choose the maximum between discarding or matching, with value
$$
R(t,d,1) = \max \{ R(t-1,d,0), \alpha_t + R(t-1,d-1,0) \}
$$
Finally, the value of the right-hand side is
\[
R(\alpha, (I_t),J )= \frac{ n - \lvert I_n \rvert }{n} R(n,|J|,0) + \frac{\lvert I_n \rvert}{n} R(n,|J|,1).
\]
The number of states is $ n \times \lvert J \rvert \times 2 =  O(n^2) $ and the number of operations to calculate a state's value is constant, so the entire recursion takes $ O(n^2) $ time.
\end{proof}

In the remainder of this section, we study particular cases of inequalities \eqref{eq:dp_ineq}. We construct them intuitively using probabilistic arguments, but their right-hand sides can also be calculated directly using the DP from Proposition \ref{prop:dp_ineq}.

As a first example, let $i\in N$, $j\in V$ and $t\in [n-1]$. Matching $i$ to $j$ in stage $t$ implies the intersection of two independent events. First, $j$ is not matched in any previous stage $[t+1,n]$, and second, $i$ appears in stage $t$. In terms of probability this means
$$
\PP(\text{match $i$ with $j$ in $t$})\leq \frac{1}{n}(1-\PP(\text{match $j$ in $[t+1,n]$})),
$$
which is equivalent to
$$
\PP(\text{match $j$ in stages $[t+1,n]$})+n\PP(\text{match $i$ with $j$ in $t$})\leq 1.
$$
The previous expression is equivalent to 
\begin{equation}\label{eq:ineq_Jsize_1}
\sum_{\tau=t+1}^n Z_{N,j}^{\tau}+nz_{i,j}^{t}\leq 1 \qquad \forall \ i\in N, \ j\in V, \ t\in [n].
\end{equation}
Inequality family \eqref{eq:ineq_Jsize_1} corresponds to a particular case of \eqref{eq:dp_ineq}, with $\lvert J \rvert = 1$, $I_\tau=N$ for $\tau\in [t+1,n]$, $\lvert I_t \rvert = 1$, $I_\tau=\emptyset$ for $\tau\leq t-1$, $\alpha_\tau=1$ for $\tau\in [t+1,n]$, $\alpha_{t}=n$, and $\alpha_\tau=0$ for $\tau\leq t-1$. Furthermore, for a fixed $ j \in V $ and $ t = 1 $, by summing the inequalities over all $ i \in N $ we obtain \eqref{eq:obm_relax_prev_jcap}.

\begin{proposition}\label{prop:ineq_Jsize_1}
Constraints \eqref{eq:ineq_Jsize_1} are facet-defining for the polyhedron of achievable probabilities $Q$ when $ t \leq n - 1 $.
\end{proposition}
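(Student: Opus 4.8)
The plan is to certify the facet by exhibiting $n^3$ affinely independent points of $Q$ that satisfy \eqref{eq:ineq_Jsize_1} with equality; since $Q$ is full-dimensional in $\RR^{n^3}$ and the inequality is valid (Proposition \ref{prop:dp_ineq}) and proper (the all-discard policy gives $z=0$, whose left-hand side is $0<1$), this suffices. As in the proof of Proposition \ref{prop:facetProbBoundExt}, I would in fact build the points so that, under a suitable ordering of coordinates, they form a triangular system with nonzero diagonal, which yields linear (hence affine) independence automatically. The first step is to pin down the equality condition: retracing the probabilistic derivation of \eqref{eq:ineq_Jsize_1}, a policy lies on the face exactly when, in every sample path where $i$ appears in stage $t$ and $j$ is still available, it matches $i$ to $j$; under this rule $z_{ij}^t=\frac1n\bigl(1-\sum_{\tau=t+1}^n Z_{N,j}^\tau\bigr)$, which is precisely equality. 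So every policy I use will obey this ``match $i$ to $j$ whenever possible at stage $t$'' rule.

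Next I would associate one tight policy to each coordinate, designed to put a nonzero value there while touching only coordinates ordered earlier. Order the coordinates as: $(i,j,t)$ first, then all coordinates with $\tau>t$, then the remaining stage-$t$ coordinates, and finally all coordinates with $\tau<t$. The coordinate $(i,j,t)$ is handled by the policy that matches $i$ to $j$ at $t$ and does nothing else, giving $\frac1n e_{i,j}^t$. For a coordinate $(k,\ell,\tau)$ with $\tau>t$, the policy matches $k$ to $\ell$ at $\tau$ and keeps the rule at stage $t$; a direct check (distinguishing $\ell=j$ from $\ell\neq j$) gives left-hand side $1$, with support in the target coordinate and $(i,j,t)$. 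For a coordinate $(k,\ell,\tau)$ with $\tau<t$, the epoch $\tau$ occurs \emph{after} stage $t$, so matching $k$ to $\ell$ there does not disturb the availability of $j$ at $t$; again the support is the target coordinate and $(i,j,t)$. Finally, for a stage-$t$ coordinate $(k,\ell,t)$ with $k\neq i$, I use that at most one impression appears per stage: the policy matches $k$ to $\ell$ when $k$ appears and $i$ to $j$ when $i$ appears, so the two actions coexist and the support is the target coordinate and $(i,j,t)$.

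The one genuinely delicate family, and the main obstacle, is the set of coordinates $(i,\ell,t)$ with $\ell\neq j$. The equality rule forbids matching $i$ to anything but $j$ at stage $t$ whenever $j$ is available, so such a match can only be recorded in paths where $j$ has already been consumed in a later-remaining stage. I would resolve this using stage $t+1$: the policy matches $i$ to $j$ at stage $t+1$ (consuming $j$ with probability $\tfrac1n$), and at stage $t$ matches $i$ to $j$ if $j$ survives and to $\ell$ otherwise. A short computation then gives $z_{i,\ell}^t=\tfrac1{n^2}>0$, left-hand side $1$, and support in $(i,\ell,t)$, $(i,j,t)$, and the already-ordered coordinate $(i,j,t+1)$. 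This is exactly where the hypothesis $t\leq n-1$ enters, since it requires the later stage $t+1$ to exist; for $t=n$ the ad $j$ is always available in the single first stage, the coordinates $(i,\ell,n)$ cannot be activated on the face, and the inequality fails to be facet-defining.

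It then remains to verify the counting and the triangular structure. There are $n^2$ coordinates per stage and $n$ stages, and the assignment uses exactly one policy per coordinate, so there are $n^3$ points. By construction every policy's support, apart from its own target coordinate, lies strictly earlier in the order (namely $(i,j,t)$, and for the $(i,\ell,t)$ family also $(i,j,t+1)$). Hence the matrix whose rows are the points and whose columns are the coordinates is lower triangular with nonzero diagonal, so the points are linearly, and therefore affinely, independent, certifying the facet. The only routine steps left are the elementary probability computations confirming equality in \eqref{eq:ineq_Jsize_1} for each listed policy, which follow immediately from appearance probabilities being $\tfrac1n$ and independent across stages.
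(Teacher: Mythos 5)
Your proof is correct, and at the structural level it follows the same strategy as the paper's: exhibit $n^3$ tight points coming from explicit policies, order the coordinates so that each point's support lies in its own coordinate plus strictly earlier ones, and conclude linear (hence affine) independence from triangularity. The genuinely different step is your treatment of the coordinates $(i,\ell,t)$ with $\ell\neq j$, and there your version is not just different but necessary. The paper covers all coordinates $(k,\ell,\tau)$ with $\ell\neq j$, $\tau\in[n]$, by the single recipe $\frac{1}{n}e^t_{i,j}+\frac{1}{n}e^\tau_{k,\ell}$, and this family as written includes $k=i$, $\tau=t$; that point is not achievable, since $i$ appears in stage $t$ with probability $1/n$ and can then be matched to at most one ad, so every $z\in Q$ satisfies $z^t_{i,j}+z^t_{i,\ell}\leq 1/n$, whereas the claimed point has $z^t_{i,j}+z^t_{i,\ell}=2/n$. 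Your policy for these $n-1$ coordinates---consume $j$ at stage $t+1$, then at stage $t$ match $i$ to $j$ if $j$ survived and to $\ell$ otherwise, giving $\frac{1}{n}e^{t+1}_{i,j}+\frac{1}{n}\left(1-\frac{1}{n}\right)e^t_{i,j}+\frac{1}{n^2}e^t_{i,\ell}$---is achievable, satisfies \eqref{eq:ineq_Jsize_1} with equality, and preserves the triangular structure because $(i,j,t)$ and $(i,j,t+1)$ precede $(i,\ell,t)$ in your ordering. It is also the only place the hypothesis $t\leq n-1$ actually enters, which your proof makes explicit (together with the correct side remark that for $t=n$ the face forces $z^n_{i,\ell}=0$ for $\ell\neq j$ and hence has dimension at most $n^3-2$); in the paper's construction as written the hypothesis is never invoked, a further symptom of the problem in that family. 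Your explicit characterization of the tight policies (match $i$ to $j$ whenever $i$ arrives in stage $t$ and $j$ is still available) is a clean addition that the paper leaves implicit, and your remaining families coincide, after regrouping by cases, with the paper's.
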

\begin{proof}
Fix $i\in N, \ j\in V, \ t\in [n-1]$. We construct the following $n^3$ affinely independent points corresponding to policies that satisfy \eqref{eq:ineq_Jsize_1} with equality:
\begin{itemize}
\item[1.] Policy for $(i,j,t)$: if $i$ appears in stage $t$, then match it to $j$ with probability $1/n$. This corresponds to the point $\frac{1}{n}e^t_{i,j}.$
\item[2.] Policy for $(k,j,\tau)$ with any $k\in N$, and any $\tau\in[t+1,n]$: If $k$ appears in stage $\tau$, match it to $j$ with probability $1/n$, but if $k$ does not appear and  $i$ appears in stage $t$, we match $i$ to $j$ with probability $\frac{1}{n}\left(1-\frac{1}{n}\right)$. This corresponds to the point $\frac{1}{n}e_{k,j}^\tau + \frac{1}{n}\left(1-\frac{1}{n}\right)e^t_{i,j}.$ As we chose any $k$ and any $\tau$, we have $n(n-t)$ points.
\end{itemize}
So far, we only have $n(n-t)+1$ points. For the remaining points, we can use modifications of policy 1 above.
\begin{itemize}
\item Policy for $(k,j,\tau)$ with any $k\in N$ and $\tau\leq t-1$: If $i$ appears in stage $t$ with probability $1/n$, then match it with $j$; if $i$ does not appear (with probability $1-1/n$), and if $k$ appears in stage $\tau$ (with probability $1/n$), then match it with $j$. This corresponds to $\frac{1}{n}e^t_{i,j}+\frac{1}{n}\left(1-\frac{1}{n}\right)e^\tau_{k,j}$. As we chose any $k$, and any $\tau\leq t-1$, we have $n(t-1)$ points. 
\item Policy for $(k,\ell,\tau)$ with any $k\in V$, $\ell\in V$ such that $\ell\neq j$, and $\tau\in[n]$:  if $i$ appears in stage $t$ with probability $1/n$, then match it with $j$; if $k$ appears in stage $\tau$ (with probability $1/n$), then match it with $\ell$. This corresponds to $\frac{1}{n}e^t_{i,j}+\frac{1}{n}e^\tau_{k,\ell}$. In total, this yields $n(n-1)n$ points. 
\item Policy for $(k,j,t)$ with $k\in V$ such that $k\neq i$: if $i$ appears in stage $t$ with probability $1/n$, then match it with $j$; if $k$ appears in stage $t$ (with probability $1/n$), then match it with $j$. This corresponds to $\frac{1}{n}e^t_{i,j}+\frac{1}{n}e^t_{k,j}$. In this family, we have $n-1$ points. 
\end{itemize}
If we order these points in a suitable way, they form the columns of a block matrix
$$A=\begin{pmatrix} A_1 & A_2\\ 0 & A_3\end{pmatrix},$$
where $A_1$ is upper triangular and $A_3$ is a diagonal matrix. $A_1$ is formed by the first $n(n-t)+1$ points from policy 1 and the policies of item 2, while $ A_2 $ and $ A_3 $ are given by the remaining points. All diagonal entries of $A_1$ and $A_3$ are positive, implying that $A$ has positive determinant. This shows that the points previously described are linearly independent, completing the proof.
\end{proof}

We next compare the inequalities we have introduced so far to the known results for the lower-dimensional polyhedron $Q'$ of achievable probabilities that are not time-indexed, which we detail in Section \ref{sec:desc}. Recall that $Q'$ is a projection of $Q$ obtained by aggregating variables $z_{ij}^t$ over all stages, $z_{ij}=\sum_{t\in[n]}z_{ij}^t$. We already indicated how inequality families \eqref{eq:obm_relax_prev_icap} and \eqref{eq:obm_relax_prev_jcap} are respectively implied by \eqref{eq:prob_bound} and \eqref{eq:ineq_Jsize_1}. We next discuss the right-star inequalities \eqref{eq:right-star_prev}.

\begin{theorem}\label{prop:dominate_rightstar}
Inequalities \eqref{eq:ineq_Jsize_1} imply the right-star inequalities \eqref{eq:right-star_prev}.
\end{theorem}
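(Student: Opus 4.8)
The plan is to fix an ad $j\in V$ and an impression set $I\subseteq N$, write $s:=\lvert I\rvert$ and $\rho:=1-s/n$, and show that inequalities \eqref{eq:ineq_Jsize_1} force $\sum_{i\in I}z_{ij}\le 1-\rho^n$. Since $z_{ij}=\sum_{t\in[n]}z_{ij}^t$, the left-hand side is exactly the stage-wise sum $\sum_{t=1}^n Z_{I,j}^t$, so the whole argument reduces to bounding this quantity using only the time-indexed inequalities and the trivial containment $I\subseteq N$.

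First I would sum \eqref{eq:ineq_Jsize_1} over all $i\in I$ for a fixed stage $t$. The common term $\sum_{\tau=t+1}^n Z_{N,j}^\tau$ is counted $s$ times and $\sum_{i\in I}nz_{ij}^t=nZ_{I,j}^t$, which after dividing by $n$ yields the ``appearance bound''
$$Z_{I,j}^t\le \frac{s}{n}\Bigl(1-\sum_{\tau=t+1}^n Z_{N,j}^\tau\Bigr).$$
It is convenient to set $U_t:=1-\sum_{\tau=t+1}^n Z_{N,j}^\tau$, the probability that $j$ is still available entering stage $t$, so that the appearance bound reads $Z_{I,j}^t\le(1-\rho)U_t$. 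Two elementary facts accompany this: $U_n=1$ (the defining sum is empty), and $U_t-U_{t-1}=Z_{N,j}^t\ge Z_{I,j}^t$, where the last inequality is immediate from $I\subseteq N$ and $z\ge 0$; I will call this the ``increment bound.''

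The core of the argument is to prove, by induction on $t=0,1,\dots,n$, the invariant
$$\sum_{\tau=1}^t Z_{I,j}^\tau\le U_t\bigl(1-\rho^t\bigr),$$
which at $t=n$ reads $\sum_{t=1}^n Z_{I,j}^t\le 1-\rho^n$, exactly the right-star inequality \eqref{eq:right-star_prev}. The base case $t=0$ is trivial, both sides being zero. For the inductive step I would write $\sum_{\tau=1}^t Z_{I,j}^\tau=\sum_{\tau=1}^{t-1}Z_{I,j}^\tau+Z_{I,j}^t$, bound the first summand by the inductive hypothesis $U_{t-1}(1-\rho^{t-1})$, then substitute the increment bound in the form $U_{t-1}\le U_t-Z_{I,j}^t$ (legitimate since $1-\rho^{t-1}\ge 0$), and finally apply the appearance bound $Z_{I,j}^t\le(1-\rho)U_t$. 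The terms in $Z_{I,j}^t$ cancel and the factor collapses to $1-\rho^{t-1}+(1-\rho)\rho^{t-1}=1-\rho^t$, closing the induction.

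I expect the main obstacle to be identifying the correct invariant: neither the appearance bound nor the increment bound alone is enough, and a naive stage-by-stage summation of \eqref{eq:ineq_Jsize_1} discards too much. The multiplicative factor $1-\rho^t$ is precisely the ``survival'' quantity one obtains by driving all the inequalities to equality (the recursion $U_{t-1}=\rho U_t$), and matching this telescoping is what lets the two bounds combine without slack; this is also where the probabilistic meaning of $1-(1-\lvert I\rvert/n)^n$ as the chance that some impression in $I$ appears across $n$ draws enters. Once the invariant is in hand, the inductive step is a routine algebraic simplification requiring no further structure of $Q$ beyond \eqref{eq:ineq_Jsize_1} and $I\subseteq N$.
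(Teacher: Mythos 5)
Your proof is correct, and it is essentially the paper's own argument in inductive disguise: unrolling your induction shows that the appearance bound at stage $t$ is used with total weight $\rho^{t-1}$ --- i.e., inequality \eqref{eq:ineq_Jsize_1} summed over $I$ receives exactly the paper's multiplier $\frac{1}{n}\left(1-\frac{|I|}{n}\right)^{t-1}$ --- while your increment bound $Z_{I,j}^\tau \le Z_{N,j}^\tau$ at stage $\tau$ enters with weight $1-\rho^{\tau-1}$, which is precisely the aggregate weight the paper places on its weakening of $N$-sums to $I$-sums. The only difference is organizational: the paper posits the geometric multipliers up front and verifies that each variable's coefficient collapses to $1$ and the right-hand sides telescope to $1-\left(1-\frac{|I|}{n}\right)^n$, whereas your invariant derives the same conic combination automatically.
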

\begin{proof}
Fix $ j \in V $ and $ I \subseteq N $.
First, for $ t = n $ \eqref{eq:ineq_Jsize_1} is simply $n z_{ij}^n \leq 1$ (it is also a weakened version of \eqref{eq:prob_bound}), and summing over  $I$ we get $n\sum_{i\in I}z_{ij}^n\leq |I|$. For $ t \leq n - 1 $, if we sum over $i\in I$ in \eqref{eq:ineq_Jsize_1} we get
$$|I|\sum_{\tau\in [t+1,n]}\sum_{k\in N}z_{k,j}^{\tau}+n\sum_{i\in I}z_{i,j}^{t}\leq |I|, \qquad \forall \ t\in[n-1],$$
and since $\sum_{i\in I}z_{i,j}^{\tau}\leq\sum_{k\in N}z_{k,j}^{\tau}$, we have
$$|I|\sum_{\tau\in [t+1,n]}\sum_{i\in I}z_{i,j}^{\tau}+n\sum_{i\in I}z_{i,j}^{t}\leq |I|, \qquad \forall \ t\in[n-1].$$
Then, multiply each inequality for $t\in[n-1]$ by $\frac{1}{n}\left(1-\frac{|I|}{n}\right)^{t-1}$, and add all of them (including the one for $t = n$); the resulting coefficient for each $ z_{ij}^t $ is
\[
\left(1-\frac{|I|}{n}\right)^{t-1} + \sum_{ \tau \leq t - 1 } \frac{\lvert I \rvert}{n}\left(1-\frac{|I|}{n}\right)^{\tau-1} = 1.
\]
We thus obtain $\sum_{t\in[n]}\sum_{i\in I}z_{ij}^t=\sum_{i\in I}z_{ij}$ in the left-hand side. In the right-hand side, we get
\[
\frac{|I|}{n}\sum_{t=1}^n\left(1-\frac{|I|}{n}\right)^{t-1}=1-\left(1-\frac{|I|}{n}\right)^n. \qedhere
\]
\end{proof}

This result shows that inequalities \eqref{eq:prob_bound} and \eqref{eq:ineq_Jsize_1} yield an upper bound that theoretically dominates the bound given by LP \eqref{eq:obm_relax_prev} with additional inequalities \eqref{eq:right-star_prev}, the best empirical bound previously known for OBM \cite{torrico_etal18}. In terms of dimension, the LP given by \eqref{eq:prob_bound} and \eqref{eq:ineq_Jsize_1} with non-negativity constraints has $ O(n^3) $ inequalities in $ \RR^{n^3} $, while \eqref{eq:obm_relax_prev} with \eqref{eq:right-star_prev} has exponentially many inequalities in $ \RR^{n^2} $.

\subsection{Policy Design} 

Theorem \ref{prop:dominate_rightstar} establishes that an LP in the space of $ z_{ij}^t $ variables with inequalities \eqref{eq:prob_bound} and \eqref{eq:ineq_Jsize_1}, 
\begin{align}\label{eq:prob+J1LP}
\max_{z \geq 0} \biggl\{ \sum_{i \in N} \sum_{j \in V} \sum_{t \in [n]} w_{ij}^t z_{ij}^t : \eqref{eq:prob_bound}, \eqref{eq:ineq_Jsize_1} \biggr\} ,
\end{align}
is guaranteed to provide a bound at least as good as the state of the art. 
%
We can also devise a policy from the LP \eqref{eq:prob+J1LP}, in a similar fashion to dynamic bid policies from network revenue management \cite{adelman07}. Denote by $\lambda_i^t\geq0$ and $\mu_{ij}^t\geq0$ the dual multipliers corresponding to constraints \eqref{eq:prob_bound} and \eqref{eq:ineq_Jsize_1} respectively. Along the lines of \cite{adelman07,torrico_etal18} and other approximate DP approaches, we construct an approximation of the true value function \eqref{eq:DP}: 
Interpret each $ \lambda_i^t $ as the value of having an impression of type $ i $ appear in period $ t $, and similarly interpret each $ \mu_{ij}^t $ as the value of having impression $ i $ appear in period $ t $ when ad $ j $ is available to match. For state $ (t, i, S) $ this yields the value function approximation
\begin{align}\label{eq:approx_function}
v_t(i,S)\approx \lambda_i^t+\sum_{\tau\in[t-1]}\EE_\eta [\lambda_{\eta}^\tau]+\sum_{j\in S} \biggl( \mu_{ij}^t+ \sum_{\tau\in [t-1]} \EE_\eta [\mu_{\eta j}^\tau] \biggr).
\end{align}
By imposing the constraints from \eqref{eq:obm_lp_val} on this approximation of the value function, we obtain the dual of \eqref{eq:prob+J1LP}:
\begin{align*}
\min_{ v \geq 0 } ~ & \EE_{\eta}[v_n(\eta,V)] & \min_{ \lambda, \mu \geq 0 } ~ & \sum_{ t \in [n] } \biggl( \EE_\eta[ \lambda_\eta^t ] + \sum_{ j \in V } \EE_\eta[ \mu_{\eta j}^t ] \biggr) \\
\text{s.t.\ } & v_{t}(i,S\cup j)-\EE_{\eta}[v_{t-1}(\eta,S)]\geq w^t_{ij}, \quad \xrightarrow{\eqref{eq:approx_function}} & \text{s.t.\ } & \lambda_i^t + \mu_{ij}^t + \sum_{\tau \in [t-1]} \EE_\eta[ \mu_{\eta j}^\tau ] \geq w_{ij}^t . \\ 
& v_{t}(i,S)-\EE_{\eta}[v_{t-1}(\eta,S)]\geq 0, 
\end{align*}
Furthermore, by replacing \eqref{eq:approx_function} in the DP recursion \eqref{eq:DP} for a state $ (t, i, S) $, we get the heuristic policy 
\begin{gather}
\argmax \Bigl\{ \max_{ j \in S } \{ w_{ij}^t + \EE_\eta[ v_{t-1}( \eta, S \setminus j ) ] \}, \EE_\eta[ v_{t-1}( \eta, S ) ] \Bigr\} \notag\\
\overset{\eqref{eq:approx_function}}{\approx} \argmax \Biggl\{ \max_{ j \in S } \biggl\{ w_{ij}^t + \sum_{\tau\in[t-1]} \biggl( \EE_\eta [\lambda_{\eta}^\tau] + \sum_{\ell \in S \setminus j} \EE_\eta [\mu_{\eta \ell}^\tau] \biggr) \biggr\}, \notag\\ 
\sum_{\tau\in[t-1]}\EE_\eta [\lambda_{\eta}^\tau] + \sum_{\ell \in S}\sum_{\tau\in [t-1]} \EE_\eta [\mu_{\eta \ell}^\tau] \Biggr\} \notag\\
= \argmax \Biggl\{
\max_{j\in S}\biggl\{w^t_{ij} - \sum_{ \tau \in [t-1] } \EE_\eta[ \mu_{\eta j}^\tau ] \biggr\},
0 \Biggr\}. \label{eq:time_index_policy}
\end{gather}
Intuitively, this policy evaluates the net benefit of a potential match of impression $ i $ to ad $ j $ in period $ t $ as the match's weight minus the value we give up by losing ad $ j $ in the subsequent remaining periods. The policy chooses the match with the largest such benefit (if positive), and otherwise discards the impression. 


\subsection{Polyhedral Study} 

Inequalities \eqref{eq:ineq_Jsize_1} correspond to a particular case of \eqref{eq:dp_ineq}, when the fixed set of ads $J$ has one element. We can apply a similar idea to a subset of any size; 
take the next simplest case of \eqref{eq:dp_ineq}, a set of size two, say $J=\{j_1,j_2\}$. Consider also two impressions $i_1, i_2\in N$, where we may have $ i_1 = i_2 $. In terms of probability, the event of matching $i_2$ with $j_1$ or $j_2$ in stage $t$ implies $i_2$ must appear in stage $t$ with probability $1/n$ and either of two events happens: First, neither $j_1$ nor $j_2$ are matched in stages $[t+2,n]$, and then $i_1$ appears in stage $t+1$ with probability $1/n$ (and can be matched to one of the ads or not); and second, $j_1$ or $j_2$ (but not both) are matched in stages $[t+2,n]$, and $i_1$ is not matched to $j_1$ nor $j_2$ in stage $t+1$ (this includes the case of another impression being matched to one of them). Since matching $j_1$ or $j_2$ in $ t $ are mutually exclusive events, we have the inequality
\begin{gather*}
\PP(\text{match $i_2$ with $j_1$ or $j_2$ in $t$})\leq \\ \frac{1}{n}\bigg[\frac{1}{n}\left(1-\PP(\text{match $j_1$ or $j_2$ in $[t+2,n]$})\right)+\left(1-\PP(\text{match $i_1$ with $j_1$ or $j_2$ in $t+1$})\right)\bigg] .
\end{gather*}
In terms of variables $z$, this is equivalent to
\begin{align}\label{eq:ineq_Jsize_2}
\begin{split}
& \sum_{\tau\in [t+2,n]} Z_{N,J}^{\tau}+n Z_{i_{t+1},J}^{t+1}+ n^2 Z_{i_t,J}^{t}\leq 1+n \\
& \qquad \qquad \qquad \forall \ i_t,i_{t+1}\in N, \ J\subseteq V, \ |J|=2, \ t\in [n-2].
\end{split}
\end{align}
This probabilistic argument can be generalized for any set $J\subseteq V$ with $|J| = h \in[n-1]$ and any $ t \leq n-h $. Let $(i_1,\ldots,i_h)$ be a sequence of nodes  in $N$ allowing repeats; 
the general constraint corresponds to
\begin{align*}
&\PP(\text{match $i_{h} $ to some $ j \in J $ in $t$})\\
&\leq \frac{1}{n}\left[\frac{1}{n^{h-1}}\left(1-\PP(\text{match $j_1$ or $j_2$ or $\ldots$ or $j_h$ in $[t+h,n]$})\right)\right.\\
&+\frac{1}{n^{h-2}}\left(1-\PP(\text{match $i_{1} $ to some $ j \in J $ in $t+h-1$})\right)\\
&+\frac{1}{n^{h-3}}\left(1-\PP(\text{match $i_{2} $ to some $ j \in J $ in $t+h-2$})\right)\\
&+\cdots+\left(1-\PP(\text{match $i_{h-1} $ to some $ j \in J $ in $t+1$})\right)\bigg].
\end{align*}
Therefore, we can give a general expression for this particular subclass of inequalities \eqref{eq:dp_ineq}:
\begin{align}\label{eq:ineq_Jsize_gen}
\begin{split}
\sum_{\tau=t+h}^{n} Z_{N,J}^{\tau}&+\sum_{\tau=t}^{t+h-1}n^{t+h-\tau} Z_{i_{\tau},J}^{\tau}\leq 1+\sum_{\tau=1}^{h-1}n^{\tau}, \\
& \forall  \ J\subseteq V, \ \lvert J \rvert = h\in[n-1], \ t\in [n-h], \ i_t,\dotsc,i_{t+h-1}\in N.
\end{split}
\end{align}
\begin{theorem}
\label{theo:ineq_Jsize_gen}
Constraints \eqref{eq:ineq_Jsize_gen} are facet-defining for $Q$.
\end{theorem}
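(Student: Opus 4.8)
The plan is to follow the same facet-proving template used in Propositions~\ref{prop:facetProbBoundExt} and~\ref{prop:ineq_Jsize_1}: since $Q$ is full-dimensional in $\RR^{n^3}$, it suffices to exhibit $n^3$ affinely independent points of $Q$ that satisfy \eqref{eq:ineq_Jsize_gen} with equality. Validity of \eqref{eq:ineq_Jsize_gen} is already guaranteed by Proposition~\ref{prop:dp_ineq}, so the whole argument reduces to producing enough tight points. Each point will be the achievable probability vector $z$ of an explicitly described feasible policy, and I would certify tightness by checking that the policy realizes the chain of conditional events underlying the probabilistic derivation of \eqref{eq:ineq_Jsize_gen}, equivalently that it attains the right-hand-side optimum in the DP of Proposition~\ref{prop:dp_ineq}, exactly as the equality cases were verified for \eqref{eq:ineq_Jsize_1}.

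Before constructing points I would partition the $n^3$ coordinates $z_{ij}^\tau$ into the \emph{support} of \eqref{eq:ineq_Jsize_gen} --- the variables $z_{ij}^\tau$ with $j\in J$ and $\tau\in[t+h,n]$, together with $z_{i_\tau,j}^\tau$ for $j\in J$, $\tau\in[t,t+h-1]$ --- and the remaining \emph{non-support} coordinates, namely those with $j\notin J$, those with $j\in J$ and $\tau\le t-1$, and those with $j\in J$, $\tau\in[t,t+h-1]$ but $i\neq i_\tau$. The goal is to assemble all points into a block matrix $A=\begin{pmatrix}A_1 & A_2\\ 0 & A_3\end{pmatrix}$, where $A_1$ (indexed by support coordinates) is upper triangular with positive diagonal and $A_3$ (indexed by non-support coordinates) is diagonal with positive diagonal, so that $\det A\neq 0$ yields linear, hence affine, independence.

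For the non-support block $A_3$ the construction generalizes the corresponding families in the proof of Proposition~\ref{prop:ineq_Jsize_1} essentially verbatim: starting from a fixed tight ``base'' policy, I would append a single extra match activating one chosen non-support variable $z_{k\ell}^\sigma$ while leaving the left-hand side of \eqref{eq:ineq_Jsize_gen} unchanged (the extra match has coefficient zero there) and preserving feasibility, because $\ell\notin J$, or the match occurs in a stage $\sigma\le t-1$ disjoint from the chain, or it uses an impression $k\neq i_\sigma$ that cannot collide with the chain. Each such policy contributes one positive diagonal entry to $A_3$, giving exactly one tight point per non-support coordinate.

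The main obstacle is building the core family $A_1$ on the support. Because $h\ge 2$, the inequality couples the $h$ chain stages $t,\dots,t+h-1$ through the geometric coefficients $n^{t+h-\tau}$ and the aggregate tail $\sum_{\tau\ge t+h}Z_{N,J}^\tau$, so a tight policy must coordinate its matches across all these stages simultaneously --- reserving the $h$ ads of $J$ and firing the nested conditional events in the correct order --- rather than saturating a single term as in the $\lvert J\rvert=1$ case. I would define one base tight policy realizing the full chain and then generate the remaining support points by perturbing which ad of $J$, which chain impression, and which stage carries the active match; the delicate points are verifying that each perturbation stays feasible (respecting ad availability and the one-match-per-stage structure of \eqref{eq:DP}) and exactly tight under the $h$-fold nested conditioning, and then checking that ordering the points by stage and by ad makes $A_1$ triangular with nonzero diagonal, with the total count coming out to exactly $\lvert\text{support}\rvert$ so that the overall point count is $n^3$.
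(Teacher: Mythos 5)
Your overall template (full-dimensionality of $Q$, exhibiting $n^3$ tight points, a block matrix whose non-support block is diagonal and handled as in Proposition~\ref{prop:ineq_Jsize_1}) coincides with the paper's plan, and your treatment of the non-support coordinates is essentially what the paper does. The genuine gap is in the core block $A_1$. You assert that, starting from one base tight policy and perturbing ``which ad of $J$, which chain impression, and which stage carries the active match,'' the support points can be ordered ``by stage and by ad'' so that $A_1$ is upper triangular with positive diagonal. For $h\ge 2$ this is precisely what fails. A policy tight for \eqref{eq:ineq_Jsize_gen} must realize the entire nested chain: the right-hand side $1+\sum_{\tau=1}^{h-1}n^\tau$ can only be attained if essentially every chain stage and the tail contribute near-maximally, so every tight point has nonzero entries spread across almost all support coordinates, and distinct tight points differ by a \emph{cyclic shift} of which ad of $J$ is consumed in which stage, not by the presence or absence of a coordinate. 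No ordering of coordinates makes such a family triangular. This is exactly why the paper's proof (for the representative case $h=n-1$, $t=1$, inequality \eqref{eq:validExt2-n-1}) organizes its point families around the circulation $\circu(J)$ of the ad set, and why, after a lengthy multi-step Gaussian elimination (steps II$^a$ through IV$^d$), the core matrix that emerges is built from \emph{circulant} blocks $C_i$, $D_i$; nonsingularity is then deduced not from triangularity but from diagonal dominance of circulant matrices, via Proposition 18 of \cite{kra_simanca12}.

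A second, related omission: your scheme generates one point per single perturbation, but the paper needs families (III and IV) whose policies carry several fallback options \emph{within the same stage}, with geometrically decreasing probabilities $(1-1/n)/n,\ (1-1/n)/n^2,\dotsc$ assigned by a priority order over the ads already used upstream. These are needed both for tightness (the DP optimum forces value to be recovered when an earlier ad has already been consumed by the tail match) and to produce enough independent directions, since one-shift perturbations of a single base policy span too small a space. Without an explicit replacement for these families and for the circulant-independence argument, the proof does not go through; what you have is the correct scaffolding, with the hard part --- which is the actual content of the paper's appendix proof --- still missing.
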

The proof of this theorem can be found in the Appendix. 

So far we have only considered either $I_\tau=N$ or $ \lvert I_\tau \rvert = 1$ within inequalities \eqref{eq:dp_ineq}. We next propose a generalization for other sets $I$.
Consider the case $J = \{j\}$, and any subset $I\subseteq N$; suppose we naively apply the same argument behind inequality \eqref{eq:ineq_Jsize_1}. Matching an element of $I$ with $j$ in stage $t$ implies the intersection of two independent events: First, $j$ is not matched in stages $[t+1,n]$, and second, some element in $I$ appears in stage $t$. In probabilistic terms,
$$
\PP(\text{match any element in $I$ with $j$ in $t$})\leq \frac{|I|}{n}(1-\PP(\text{match $j$ in $[t+1,n]$})),
$$
which is equivalent to
$$
|I|\sum_{\tau=t+1}^n Z_{N,j}^{\tau}+n Z_{I,j}^{t}\leq |I|.
$$
However, this inequality is made redundant by \eqref{eq:ineq_Jsize_1}, because we can sum over $i\in I$ for the same fixed $ t $ to get it.

Consider instead $J=\{j_1,j_2\}$, any $I_1 \subseteq N$ with $ \lvert I_1 \rvert \geq 2 $, and another impression $i_2\in N$; we apply the same argument used for \eqref{eq:ineq_Jsize_2}, but substituting $ I_1 $ for the single impression $ i_1 $. Matching $i_2$ with $j_1$ or $j_2$ in stage $t$ implies $i_2$ appears in stage $t$ with probability $1/n$, and either of two previous events happens: First, neither $j_1$ nor $j_2$ are matched in stages $[t+2,n]$, and then any element in $I_1$ appears in stage $t+1$ with probability $|I_1|/n$ (and is matched to one of the ads or not); and second, one of $j_1$ or $j_2$ is matched in stages $[t+2,n]$, and no element from $I_1$ is matched to $j_1$ nor $j_2$ in $t+1$ (this includes the case of another impression being matched to one of them). Since matching $j_1$ or $j_2$ in $ t $ are mutually exclusive, we have 
\begin{align*}
\PP(\text{match $i_2$ with $j_1$ or $j_2$ in $t$})&\leq \frac{1}{n}\bigg[\frac{|I_1|}{n}\left(1-\PP(\text{match $j_1$ or $j_2$ in $[t+2,n]$})\right)\\
&+\left(1-\PP(\text{match some $i \in I_1$ with $j_1$ or $j_2$ in $t+1$})\right)\bigg],
\end{align*}
which is equivalent to
\begin{equation}\label{eq:ineq_anyI_Jsize2}
\lvert I_1 \rvert \sum_{\tau=t+2}^n Z_{N,J}^{\tau}+n Z_{I_1,J}^{t+1}+n^2 Z_{i_2,J}^{t}\leq \lvert I_1 \rvert +n .
\end{equation}
As with inequalities \eqref{eq:ineq_Jsize_1}, if we attempt to naively extend this argument by considering a larger set $ I_2 $ instead of the single impression $ i_2 $, we simply get redundant inequalities. However, we can generalize
\eqref{eq:ineq_anyI_Jsize2} using the same argument for \eqref{eq:ineq_Jsize_gen}: For any $ I \subseteq N $ with $ \lvert I \rvert = r \leq n-1 $ and any $ J \subseteq V $ with $ \lvert J \rvert = h \leq n -1 $, we obtain the inequalities
\begin{align}\label{eq:ineq_IJ_gen}
\begin{split}
& r \sum_{\tau=t+h}^{n} Z_{N,J}^{\tau}+n Z_{I,J}^{t+h-1}+\sum_{\tau=t}^{t+h-2}n^{t+h-\tau} Z_{i_{\tau},J}^{\tau}\leq r+\sum_{\tau=1}^{h-1}n^{\tau}, \\
& \forall \   J\subseteq V,  \lvert J \rvert = h\in[n-1], \ I\subseteq N, \lvert I \rvert = r\in [n-1], t\in [n-h], \ i_t,\dotsc,i_{t+h-2}\in N.
\end{split}
\end{align}

\begin{theorem}\label{theo:ineq_IJ_gen}
Constraints \eqref{eq:ineq_IJ_gen} are facet-defining for $Q$.
\end{theorem}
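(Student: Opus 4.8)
The plan is to mirror the facet proofs of Propositions~\ref{prop:facetProbBoundExt} and \ref{prop:ineq_Jsize_1} and of Theorem~\ref{theo:ineq_Jsize_gen}. Validity of \eqref{eq:ineq_IJ_gen} is already supplied by the probabilistic derivation preceding the statement, and alternatively by Proposition~\ref{prop:dp_ineq}: \eqref{eq:ineq_IJ_gen} is the instance of \eqref{eq:dp_ineq} with $\alpha_\tau=r,\ I_\tau=N$ for $\tau\ge t+h$, with $\alpha_{t+h-1}=n,\ I_{t+h-1}=I$, with $\alpha_\tau=n^{t+h-\tau},\ I_\tau=\{i_\tau\}$ for $\tau\in[t,t+h-2]$, and $\alpha_\tau=0$ otherwise, and one verifies that the DP of Proposition~\ref{prop:dp_ineq} returns the value $R:=r+\sum_{\tau=1}^{h-1}n^\tau$. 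It then remains to exhibit $n^3$ affinely independent points of $Q$ tight for \eqref{eq:ineq_IJ_gen}; since $R>0$ it suffices, as in Proposition~\ref{prop:ineq_Jsize_1}, to produce $n^3$ linearly independent such points, each the achievable vector of an explicit policy, and to order them so that they form the columns of a block-triangular matrix with nonzero diagonal.

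First I would split the $n^3$ coordinates $z_{ij}^\tau$ into the support of \eqref{eq:ineq_IJ_gen} (the triples with $j\in J$ and either $\tau\ge t+h$, or $\tau=t+h-1$ with $i\in I$, or $\tau\in[t,t+h-2]$ with $i=i_\tau$) and the complementary free coordinates, and check that the two groups have sizes summing to $n^3$. The backbone of every tight point is the optimal reserve-and-match policy: keep all $h$ ads of $J$ through stages $\ge t+h$, and at each tracked stage $t+h-1,\dots,t$ match the designated impression to an available ad of $J$. Because there are exactly $h$ ads and $h$ tracked stages, an ad is always on hand, and the collected weight is $n\cdot\frac rn+\sum_{\tau=t}^{t+h-2}n^{t+h-\tau}\cdot\frac1n=R$, so this vector is tight.

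To span the support I would activate one support coordinate per point by perturbing the backbone: for a coordinate with $\tau\ge t+h$ I match one impression to the chosen ad of $J$ at stage $\tau$ (legal because, holding all $h$ ads, a single match at a reserve stage is an alternative optimal action), for a coordinate at a tracked stage I route the tracked match to the chosen ad $j\in J$, and for the $I$-stage I route it through the chosen $i\in I$; ordered by stage and ad these give an upper-triangular block $A_1$ with positive diagonal. For a free coordinate I would append to an optimal policy a match that does not touch the left-hand side of \eqref{eq:ineq_IJ_gen}: a match to an ad outside $J$, a match at a stage $\le t-1$, a match of a non-$I$ impression at the $I$-stage, or a match of a non-designated impression at a tracked stage (the last two restricted to histories with surplus ads, so that no later tracked match is lost). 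These give a diagonal block $A_3$, while their backbone parts live only in the support rows, so the full matrix has the form $\begin{pmatrix}A_1&A_2\\0&A_3\end{pmatrix}$ with nonzero determinant.

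The hard part will be the free coordinates that collide with the backbone, namely $z_{i_\tau,\ell}^\tau$ with $\ell\notin J$ at a tracked stage $\tau$: here the extra match diverts the very impression the backbone relies on, so forgoing its tracked match is tight only if the policy reaches stage $\tau$ in a dynamic-programming state where matching to $J$ and not matching are both optimal. I would therefore have to show that such indifference states are reachable by tight policies --- concretely, that one can shed the right number of $J$-ads along a chain of tied-optimal actions (one surplus ad at a reserve stage, then one at the $I$-stage and at each intervening tracked stage) so as to arrive at stage $\tau$ with exactly the ad count at which the match decision is indifferent, the value lost to diversion being recovered through the telescoping $n^{t+h-\tau}$ weights. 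Pinning down this indifference structure of the DP, and verifying the whole collection stays linearly independent, is the crux of the argument; it is precisely where the proof must combine the single-ad case $h=1$ of Proposition~\ref{prop:ineq_Jsize_1} with the single-impression case $r=1$ of Theorem~\ref{theo:ineq_Jsize_gen}.
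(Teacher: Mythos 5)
Your overall scaffolding (splitting the $n^3$ coordinates into the support of \eqref{eq:ineq_IJ_gen} and the free coordinates, exhibiting tight policies as points, and assembling a block matrix $\begin{pmatrix}A_1 & A_2\\ 0 & A_3\end{pmatrix}$ with the free coordinates handled as in Proposition \ref{prop:ineq_Jsize_1}) is indeed how the paper proceeds, and your backbone policy and its tightness computation are correct. But there is a genuine gap at the step you dispatch in one clause: the claim that the support points, ``ordered by stage and ad,'' form an upper-triangular block $A_1$ with positive diagonal. This cannot be true once $h\geq 2$: some column of a triangular matrix with nonzero diagonal has a single nonzero entry, but no tight point of \eqref{eq:ineq_IJ_gen} is supported on a single coordinate, since any one coordinate contributes at most $\max\{r/n,\,1,\,n^{h-1}\}=n^{h-1}$ to the left-hand side, which is strictly less than $R=r+\sum_{\tau=1}^{h-1}n^{\tau}$. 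Every tight policy must spread mass over at least two support coordinates (your backbone itself spreads over $r+h-1$ of them), so no ordering of perturbations of a common backbone can be triangular, and linear independence of the support block does not come for free.

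This is exactly why the paper's argument (the proof of Theorem \ref{theo:ineq_Jsize_gen}, which Theorem \ref{theo:ineq_IJ_gen} reuses after reducing WLOG to $h=n-1$, $t=1$, $i_\tau\equiv i$, i.e.\ to inequality \eqref{eq:validExt2-genI-n-1}, and modifying only families I, II and V to account for the set $I$) is as long as it is: the assignment of ads to tracked stages is fixed to be the cyclic shifts $\circu(J)$, so that the support restrictions of the five point families assemble into a matrix of circulant blocks $C_i$, $D_i$; independence is then shown by an explicit Gaussian elimination reducing to blocks such as $\circu(1/n,1,n,\ldots,n^{n-3})$, whose nonsingularity follows from a diagonal-dominance criterion for circulant matrices (Proposition 18 of \cite{kra_simanca12}). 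None of this --- the cyclic assignment, the elimination, or the nonsingularity criterion --- appears in your proposal, and without something playing that role the heart of the facet proof is missing. By contrast, the issue you single out as the crux (free coordinates $z_{i_\tau,\ell}^{\tau}$ with $\ell\notin J$ colliding with the backbone) is comparatively routine: the repeated-ad families (II--IV) deliberately create scenarios in which a tracked stage is already blocked, and appending the diversion to $\ell$ in such a scenario costs nothing on the left-hand side, so these columns only populate $A_2$ and the diagonal block $A_3$, leaving the block structure intact.
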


For a proof of this theorem, see the Appendix.

In inequalities \eqref{eq:ineq_IJ_gen}, we do not consider $ I = N $. Suppose we apply the same argument for \eqref{eq:ineq_anyI_Jsize2} in this case; we then obtain
$$n\sum_{\tau=t+h}^{n} Z_{N,J}^{\tau}+n Z_{N,J}^{t+h-1}+\sum_{\tau=t}^{t+h-2}n^{t+h-\tau} Z_{i_{\tau},J}^{\tau}\leq n+\sum_{\tau=1}^{h-1}n^{\tau}.$$
Dividing by $n$, we get
$$\sum_{\tau=t+h}^{n} Z_{N,J}^{\tau}+ Z_{N,J}^{t+h-1}+\sum_{\tau=t}^{t+h-2}n^{t+h-\tau-1} Z_{i_{\tau},J}^{\tau}\leq 1+\sum_{\tau=1}^{h-1}n^{\tau-1},$$
which is equivalent to
$$\sum_{\tau=t+h}^{n} Z_{N,J}^{\tau}+ Z_{N,J}^{t+h-1}+n Z_{i_{t+h-2},J}^{t+h-2}+\sum_{\tau=t}^{t+h-3}n^{t+h-\tau-1} Z_{i_{\tau},J}^{\tau}\leq 2+\sum_{\tau=1}^{h-2}n^{\tau}.$$
This idea also generates valid inequalities for $Q$, but we can generalize it even more. Thus far, we consider an arbitrary subset $I$ in stage $t+h-1$, but in this last inequality the arbitrary subset can ``shift'' to stage $t+h-2$, so we can actually state a more general valid inequality
\begin{align*}
r\sum_{\tau=t+h-1}^{n}& Z_{N,J}^{\tau}+n Z_{I,J}^{t+h-2}+\sum_{\tau=t}^{t+h-2}n^{t+h-\tau-1} Z_{i_{\tau},J}^{\tau}\leq 2r+\sum_{\tau=1}^{h-2}n^{\tau}, \\
 & \forall \  t\in [n-h], \ J\subseteq V, \ l\in[n-1], \ I\subseteq N, \ r\in [n-1], \ i_t,\dotsc,i_{t+h-2}\in N.
\end{align*}
In this last inequality we only consider $r\in[n-1]$, but as before, we can actually again take $I=N$ in stage $t+h-2$. After dividing by $n$, we get
$$
\sum_{\tau=t+h-1}^{n} Z_{N,J}^{\tau}+ Z_{N,J}^{t+h-2}+\sum_{\tau=t}^{t+h-2}n^{t+h-\tau-2} Z_{i_{\tau},J}^{\tau}\leq 2+\sum_{\tau=1}^{h-2}n^{\tau-1},
$$
equivalent to
$$
\sum_{\tau=t+h-2}^{n} Z_{N,J}^{\tau}+n Z_{i_{t+h-3},J}^{t+h-3}+\sum_{\tau=t}^{t+h-4}n^{t+h-\tau-2} Z_{i_{\tau},J}^{\tau}\leq 3+\sum_{\tau=1}^{h-3}n^{\tau} .
$$
This is also a valid inequality for $Q$, and we can continue doing this process as many as $h-2$ times until we get
$$
r\sum_{\tau=t+2}^{n} Z_{N,J}^{\tau}+n Z^{t+1}_{i_{t+1},J}+n^2 Z_{i_{t},J}^{t}\leq r(h-1)+n,
$$
which is a generalization of \eqref{eq:ineq_Jsize_2}. Finally, if we do this process one more time we get
$$
\sum_{\tau=t+1}^{n} Z_{N,J}^{\tau}+n Z_{i_{t},J}^{t}\leq h,
$$
which is clearly implied by summing over $j \in J$ in \eqref{eq:ineq_Jsize_1}. 

Denote by $q\in[0,h-2]$ the number of times we apply this procedure. We now state the most general family of valid inequalities we have obtained as a specific subclass of \eqref{eq:dp_ineq}. We subdivide this class using a 4-tuple $(h,r,t,q)$, which respectively identifies the size of $J$, the size of $I$, the stage, and the number of times we apply the previous procedure. So, for any $J\subseteq V$ with $\lvert J \rvert = h\in[2,n-1]$, any $I\subseteq N$ with $ \lvert I \rvert = r\in[n-1]$, any $t\in[n-h]$, and any $q\in[0,h-2]$, we have the following valid inequality
\begin{align}\label{eq:ineq_general}
r \sum_{\tau = t+h-q}^n Z_{N,J}^{\tau} +n Z^{t+h-q-1}_{I,J}&+\sum_{\tau=t}^{t+h-q-2}n^{t+h-q-\tau} Z^{\tau}_{i_{\tau},J} \leq r(q+1)+\sum_{\tau=1}^{h-q-1}n^{h-q-\tau} .
\end{align}
We have already proved that the inequalities given by $(h,r,t,0)$ are facet-defining; here we give the general result.
\begin{theorem}
\label{theo:ineq_general}
Inequalities \eqref{eq:ineq_general} identified by $(h,r,t,q)$ are facet-defining for $Q$ when $ h \in [2, n-1] $, $ r \in [n-1] $, $ t \in [n - h] $ and $ q \in [0, h-2] $.
\end{theorem}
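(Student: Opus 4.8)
The plan is to follow the template established in the earlier facet proofs (Propositions \ref{prop:facetProbBoundExt} and \ref{prop:ineq_Jsize_1}, and Theorems \ref{theo:ineq_Jsize_gen} and \ref{theo:ineq_IJ_gen}). Since $Q$ is full-dimensional in $\RR^{n^3}$ and \eqref{eq:ineq_general} is already known to be valid, it suffices to exhibit $n^3$ achievable probability vectors that satisfy \eqref{eq:ineq_general} with equality and are linearly independent; linear independence of $n^3$ points forces affine independence, so the face they span has dimension $n^3-1$ and is therefore a facet. As in Proposition \ref{prop:ineq_Jsize_1}, I would arrange these points as the columns of a block matrix $A=\begin{pmatrix} A_1 & A_2\\ 0 & A_3\end{pmatrix}$ with $A_1$ upper triangular and $A_3$ diagonal, both having strictly positive diagonal entries, so that $\det A>0$; here the rows are ordered with the coordinates in the support of \eqref{eq:ineq_general} first, and $A_1$ collects the saturating policies supported only on those coordinates while $\begin{pmatrix} A_2\\ A_3\end{pmatrix}$ collects the remaining, off-support, points.

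First I would isolate the base saturating policy implicit in the probabilistic derivation of \eqref{eq:ineq_general}: in the early window $[t+h-q,n]$ it matches appearing impressions to $J$ at the prescribed rate, in stage $t+h-q-1$ it matches an element of $I$ to $J$, and in the decay window $[t,t+h-q-2]$ it matches $i_\tau$ to $J$ whenever $i_\tau$ appears and an ad of $J$ remains. Because the derivation equates the two sides, this conditional family of matches attains \eqref{eq:ineq_general} with equality, and it is supported entirely on the inequality's variables.

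Next I would fill the diagonal block $A_3$, corresponding to coordinates $(k,\ell,\tau)$ outside the support of \eqref{eq:ineq_general} (those with $\ell\notin J$, with $\tau\in[1,t-1]$, with $\tau=t+h-q-1$ and $k\notin I$, or with $\tau\in[t,t+h-q-2]$ and $k\neq i_\tau$). For each such coordinate I would append to the base policy one additional match of $k$ to $\ell$ in stage $\tau$, activated on the branch where $k$ (rather than $i_\tau$) appears; because this extra match involves no variable appearing in \eqref{eq:ineq_general} and occurs on a branch where the base policy matches nothing in $J$, the new point still saturates the inequality. It contributes a strictly positive entry in its own off-support coordinate, while its support coordinates (inherited from the base policy) populate $A_2$, giving the required diagonal structure of $A_3$.

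The main work, and the main obstacle, lies in the support block $A_1$: producing saturating policies, supported only on the inequality's variables, that activate each support coordinate and pin down the normal direction of \eqref{eq:ineq_general} up to scaling. Here I would exploit the three-regime structure, constructing for each support stage $\tau$ a variant of the base policy that shifts which ad of $J$, or which impression, realizes the decisive match, and ordering these variants by stage so the induced coefficients form an upper-triangular pattern with positive diagonal. The delicate point is reconciling the geometrically weighted single-impression stages $[t,t+h-q-2]$ with the coefficient-$n$ stage $t+h-q-1$ and the coefficient-$r$ window $[t+h-q,n]$ so that every support coordinate admits its own saturating witness without disturbing the earlier-ordered ones. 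I expect the cleanest route is an induction on $q$ that reduces the general case to the already-established base case $q=0$ (Theorem \ref{theo:ineq_IJ_gen}) by transporting its point construction through the same "shifting" step used to derive \eqref{eq:ineq_general}, rather than rebuilding all $n^3$ points from scratch.
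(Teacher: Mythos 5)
Your overall template---full-dimensionality of $Q$, exhibiting $n^3$ linearly independent saturating points, a block arrangement with a diagonal block for off-support coordinates---is the same as the paper's, but the proposal has a genuine gap exactly at the step you yourself call ``the main work'': the support block $A_1$. You defer its construction to an induction on $q$ that ``transports'' the $q=0$ construction of Theorem \ref{theo:ineq_IJ_gen} through the shifting procedure used to derive \eqref{eq:ineq_general}, and this cannot work as stated, for two reasons. First, the shift passes through the excluded case $I=N$: dividing the $(h,n,t,q-1)$ inequality by $n$ yields \emph{literally} the $(h,1,t,q)$ inequality, so the only statement the shift could transport is one that is never available as an inductive hypothesis (both Theorem \ref{theo:ineq_IJ_gen} and Theorem \ref{theo:ineq_general} require $r\le n-1$); invoking it is circular. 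Second, and more fundamentally, the passage from the shifted singleton case $(h,1,t,q)$ to general $(h,r,t,q)$ is not an algebraic consequence of anything at level $q-1$: it enlarges the support (the stage-$(t+h-q-1)$ block grows from $\lvert J\rvert$ to $r\lvert J\rvert$ coordinates), changes the early-window coefficient from $1$ to $r$, and raises the right-hand side by $(r-1)(q+1)$. The two inequalities therefore define \emph{different faces}, and saturating points do not transport: for instance, any saturating point of the $r=1$ inequality with no matches in the window $[t+h-q,n]$ (the analogue of the paper's family V) has the same left-hand side under \eqref{eq:ineq_general} but now falls strictly below the larger right-hand side. So there is no transport map, and the enlarged family of points activating the $I\times J$ block must be built explicitly---which is precisely what your proposal leaves open.

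For comparison, the paper's (terse) proof does exactly that direct construction: it re-runs the explicit families I--V from the proofs of Theorems \ref{theo:ineq_Jsize_gen} and \ref{theo:ineq_IJ_gen}, the only change being that the collection of points corresponding to $I$ is larger (one point per pair in $I\times J$ at stage $t+h-q-1$), and observes that these still form a diagonal block, after which the same elimination goes through. Two further details in your plan would also need repair. Your claim that $A_1$ can be made upper triangular with positive diagonal (as in Proposition \ref{prop:ineq_Jsize_1}) does not hold for these general inequalities: in the paper's proofs the saturating policies cycle through $\circu(J)$, the support block is block-circulant, and linear independence rests on nonsingularity of circulant matrices (Proposition 18 of \cite{kra_simanca12}) after a delicate Gaussian elimination, not on triangularity. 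And your off-support construction appends a match $(k,\ell,\tau)$ with $\ell\notin J$ to a base policy that, as you describe it, matches \emph{every} appearing impression to $J$ throughout $[t+h-q,n]$; on such a policy there is no free branch in an early-window stage on which to append that match without destroying saturation, so the off-support points must instead be grafted onto sparser saturating policies, as the paper does.
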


Finally, we show the following complexity result for this general family of facet-defining inequalities.
\begin{proposition}\label{prop:NPhard}
It is NP-hard to separate inequalities \eqref{eq:ineq_IJ_gen}, and thus also \eqref{eq:ineq_general}.
\end{proposition}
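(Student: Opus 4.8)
The plan is to exhibit a polynomial reduction from a canonical NP-hard subset-selection problem, showing that a separation oracle for \eqref{eq:ineq_IJ_gen} could decide it. The natural candidate is a dense-submatrix / maximum-edge-biclique problem: given a bipartite graph with biadjacency matrix $A$ and integers $r,h$, decide whether $A$ contains an all-ones $r\times h$ submatrix. This matches the structure of \eqref{eq:ineq_IJ_gen}, whose only genuinely combinatorial term is the bilinear quantity $Z_{I,J}^{t+h-1}=\sum_{i\in I,j\in J}z_{ij}^{t+h-1}$ together with the cardinality-constrained choices of $I$ and $J$; for a fixed ad set $J$ the best impression set $I$ and the single impressions $i_\tau$ are obtained greedily, so the hardness must be forced to reside in the choice of $J$.

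First I would construct a point $z\in\RR_{\ge0}^{n^3}$ whose support lies in a short window of stages. In the single-impression stages $\tau\in[t,t+h-2]$ I place ``anchor'' impressions designed so that $\max_i Z_{i,J}^\tau=1/n$ is attained; because the coefficient $n^{t+h-\tau}$ multiplying $Z_{i_\tau,J}^\tau$ is exactly calibrated, driving each of these terms to its maximum makes the left-hand side meet the large constants $\sum_{\tau=1}^{h-1}n^\tau$ on the right exactly. In the remaining stage $t+h-1$ I encode the matrix $A$, suitably scaled, into the entries $z_{ij}^{t+h-1}$, and I leave the stages $[t+h,n]$ empty so that the $Z_{N,J}$ term vanishes. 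With this construction the member of \eqref{eq:ineq_IJ_gen} indexed by the target tuple is violated precisely when $n\max_{|I|=r}Z_{I,J}^{t+h-1}$ exceeds $r$, i.e.\ when the scaled dense-submatrix value clears the biclique threshold; choosing the scaling constant so that the threshold falls strictly between $rh-1$ and $rh$ makes ``a violated target inequality exists'' equivalent to ``$A$ contains an $r\times h$ all-ones submatrix.''

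The hard part will be ruling out \emph{spurious} violations: a separation oracle reports a cut from \emph{any} tuple $(h',r',t',J',I',(i'_\tau))$, and I must guarantee that for the constructed $z$ no inequality outside the intended family can be violated, so that the oracle answers ``yes'' exactly when the biclique exists. The geometric weights $n^{t+h-\tau}$ are a double-edged sword here: they enable the exact cancellation above, but they also mean that a mismatched window or cardinality can leave an uncancelled power of $n$ on one side, and I must prove a uniform, term-by-term bound---exploiting the sparse stage-support of $z$ and the gaps between consecutive powers of $n$---showing that every off-target inequality is slack. Pinning the row count $r$ (the term $-r'$ on the right-hand side is ``cheap'' relative to the dominant powers of $n$, so an unconstrained oracle could otherwise prefer a different $r'$) and forcing window alignment through the placement of the anchor impressions is the delicate step on which the whole reduction hinges. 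Finally, since \eqref{eq:ineq_IJ_gen} is exactly the $q=0$ subfamily of \eqref{eq:ineq_general}, separating \eqref{eq:ineq_general} subsumes separating \eqref{eq:ineq_IJ_gen}, so the NP-hardness transfers immediately, as the statement asserts.
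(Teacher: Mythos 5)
Your core idea coincides with the paper's: concentrate the point $z$ on the single stage $t+h-1$, so that the only live term of \eqref{eq:ineq_IJ_gen} is $n Z_{I,J}^{t+h-1}$ and separation collapses to the cardinality-constrained bilinear maximization $\max\{Z_{I,J}^{t+h-1} : \lvert I\rvert = r, \lvert J\rvert = h\}$, i.e.\ a balanced biclique problem, whose NP-hardness is cited from \cite{dawande_etal96}. The paper's proof is exactly this and nothing more: fix $h=r$ and $t$, take $z$ zero (or constant) outside stage $t+h-1$, observe that separation \emph{for this fixed tuple} is verbatim the \emph{weighted} maximum balanced biclique problem, and pad when $h\neq r$. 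Because the paper uses the weighted version of biclique, it needs no scaling constant calibrated between $rh-1$ and $rh$, and because $z$ vanishes on stages $[t,t+h-2]$, all single-impression terms are identically zero, so no cancellation bookkeeping is needed.

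The genuine gap in your write-up is the step you yourself flag as ``the delicate step on which the whole reduction hinges'': proving that for your constructed $z$ no off-target tuple $(h',r',t',J',I',(i'_\tau))$ is violated. You never carry this out, so what you have is a plan, not a proof --- and your anchor construction makes that missing step both necessary and hard. By saturating the terms $n^{t+h-\tau}Z^{\tau}_{i_\tau,J}$ you push the left-hand side to within $r$ of the right-hand side, and the anchors themselves then threaten other members of the family: for instance, if an anchor at stage $\tau_0$ concentrates its mass $1/n$ on one ad $j$, the $h'=1$ member \eqref{eq:ineq_Jsize_1} (with $t'=\tau_0$, $I'=\{i_{\tau_0}\}$, $J'=\{j\}$) has left-hand side $1+\sum_{\tau>\tau_0}Z^{\tau}_{N,j}$ against right-hand side $1$, which is violated as soon as any later stage puts positive mass on $j$; avoiding this forces the anchors to be spread as $z^{\tau}_{i_\tau j}\le 1/(nh)$ per ad and a term-by-term slackness analysis over all windows and cardinalities. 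It is worth saying that the paper does not solve this cross-tuple difficulty either --- it sidesteps it by proving hardness of separation \emph{within the subfamily of a fixed $(h,r,t)$} (equivalently, finding the most violated inequality for a given tuple), which is how it reads the proposition. So either complete the uniform bound you sketch, or drop the anchors, zero out everything outside stage $t+h-1$, and fix the tuple as the paper does; the latter yields a four-line proof. Your final remark --- that \eqref{eq:ineq_IJ_gen} is the $q=0$ subfamily of \eqref{eq:ineq_general}, so hardness transfers --- matches the paper's statement and is fine.
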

\begin{proof}
Fix $ h = r $ and $ t $.
Suppose we have a solution $ z $ that is zero (or constant) in all values except for stage $ t + h - 1 $. In this case, the separation problem for this $ h $, $ r $ and $ t $ is equivalent to
\[
\max
\{ Z_{I,J}^{t+h-1} : I \subseteq N, J \subseteq V, \lvert I \rvert = \lvert J \rvert = h = r \}.
\]
This is a weighted version of the maximum balanced biclique problem, which is NP-hard \cite{dawande_etal96}. For $ h \neq r $, the problem can be transformed to make the two cardinalities equal.
\end{proof}
%

\section{Computational Study}\label{sec:exp}


\subsection{Description of Experiments}

Our main experimental goal is testing the effectiveness of our new dynamic relaxations and comparing the new bounds given by these relaxations to several benchmarks. As a secondary goal, we also study the heuristic policy \eqref{eq:time_index_policy} implied by our relaxation and compare it with the best empirically performing policy from the literature.

The best empirical bound previously known for OBM is the LP \eqref{eq:obm_relax_prev} with additional inequalities \eqref{eq:right-star_prev} \cite{torrico_etal18}. Our results in the previous section establish that \eqref{eq:prob+J1LP} is guaranteed to be no worse.
So we compare these two bounds to determine how much of an improvement the latter LP \eqref{eq:prob+J1LP} offers over the former. In addition, we would like to examine if some of the other inequalities we introduce can further improve the bound. However, testing these additional inequality classes involves computational challenges. In particular, the LP's dimension grows as $ n^3 $, implying a relatively large number of variables even for moderately sized instances. This practically limits both the number of inequalities we consider, and the actual number we can dynamically add to the LP. To this end, we test adding inequalities \eqref{eq:ineq_Jsize_2} to \eqref{eq:prob+J1LP}; these inequalities are still polynomially many, $ \Theta(n^5) $, and relatively efficient to separate over. We also considered including inequalities \eqref{eq:validExt2-n-1}, that is, the special case of \eqref{eq:ineq_Jsize_gen} with $ h = n - 1 $ and $ t = 1 $, as they are also simple to separate over despite numbering $ \Theta(n^n) $. However, our preliminary experiments revealed numerical difficulties with these inequalities; the smallest non-zero coefficient is $ 1 $, while the largest is $ n^{n-1} $, and although these numbers (and all of the coefficients and right-hand sides of our inequalities) require $ O(n \log n) $ space in binary representation and are thus of polynomial size, in practical terms these differences in scale make it difficult to even determine whether a particular inequality is violated, and thus to separate over the entire family. We therefore did not include these inequalities in our experiments.

As for lower bounds given by heuristic policies, \cite{torrico_etal18} introduce a \emph{time-dependent ranking} policy derived from \eqref{eq:obm_relax_prev} with additional inequalities \eqref{eq:right-star_prev}, and results in this paper establish it as the best performing policy among several from the literature. We use it as a benchmark to test policy \eqref{eq:time_index_policy}. 

Finally, we include as additional benchmarks the optimal value given by the DP recursion \eqref{eq:DP} (for small instances where it can be computed), as well as the max-weight expected off-line matching, the expected value of the matching we would choose if we could observe the entire sequence of realized impressions before making a decision. This latter benchmark is also an upper bound on the optimal value, as it relaxes non-anticipativity.

\subsection{Instance Design and Implementation}

All of the instances we tested have $ n = m = T $, with binary edge weights constant over time, $ w_{ij}^t = w_{ij} \in \{0, 1\} $. In other words, all the instances are max-cardinality OBM problems with static edges; the static weights are required because the benchmarks we use to compare against do not accommodate weights that vary over time. We generate  instances with the following rubrics:
\begin{enumerate}
\item $ 20 $ \emph{small} instances with $n=10$, each one randomly generated by having a possible edge in $ N \times V $ be present independently with a probability of $ 25\% $, so the expected average degree is 2.5.
\item $ 20 $ \emph{large, dense} instances with $n=100$, each one randomly generated by having a possible edge in $N\times V$ be present independently with a probability of $ 10\% $, so the expected average degree is 10. 
\item $ 20 $ \emph{large, sparse} instances with $n=100$, each one randomly generated by having a possible edge in $N\times V$ be present independently with probability of 2.5\%, so the expected average degree is 2.5. 
\item A set of \emph{large, $k$-regular} graphs with $ n = 100 $ and $ k \in \{ 3, 4, 5, 6 \} $, constructed in the following way: Indexing both impressions and ads from $0$ to $n-1$, each impression $ i $ is adjacent to ads $ \{i,i+1,\ldots,i+k-1\} \mod k $. The motivation for this last set of experiments is that the relaxations and policies may behave differently on instances with a high degree of symmetry, as opposed to randomly generated instances.
\end{enumerate}
For any experiment requiring simulation, including computing the expected value of the heuristic policies and the max-weight off-line matching, we used $ 20,000 $ simulations and report the sample mean and sample standard deviation.

For small instances, all the bound experiments took a few seconds on average. For the larger instances, we solved the benchmark LP's following the approach from \cite{torrico_etal18}. For the new bounds, we formulated \eqref{eq:prob+J1LP} but eliminated all variables corresponding to missing edges; this results in models with an average of 25,000, 100,000 and $ 10,000 \times k $ variables for sparse, dense and regular instances respectively. The solution times for these LP's were roughly one hour for dense instances, and under a minute for sparse instances, with regular instances varying as $k$ grows.
%
After solving this LP, we switched to constraint generation for inequalities \eqref{eq:ineq_Jsize_2}; however, after preliminary experiments we did this only for small and large sparse instances, because of protracted solve times with minimal bound improvement in the other cases.


\subsection{Summary of Results}


Table \ref{table:ratios1} summarizes the experiment results for all instances except regular ones, which are detailed individually below. For each instance class, in each row we present the geometric mean of each bound or policy's ratio to a fixed benchmark -- the DP value for small instances and the max-weight expected off-line matching for large ones. 
We also report the sample standard deviation of these ratios in parenthesis.
\begin{table}[h]
\begin{center}
{\small
{\begin{tabular}{|c|c|c|c|}
\hline
Bound/Policy                 &  Small   & Large Dense & Large Sparse \\ \hline
\eqref{eq:obm_relax_prev} + \eqref{eq:right-star_prev}          &  1.0845 (0.0263)               & 1.0004 (0.0004)       & 1.0813 (0.0085)     \\ \hline
\eqref{eq:prob+J1LP}        &  1.0407  (0.0099)           & 0.9739 (0.0011)   & 1.0283  (0.0050)    \\ \hline
\eqref{eq:prob+J1LP} + \eqref{eq:ineq_Jsize_2} &  1.0381 (0.0091)       &   -  & 1.0278 (0.0050)       \\ \hline
 \hline
Off-Line Exp.\ Matching          &  1.0253 (0.0134)         & 1          & 1            \\ \hline
\eqref{eq:DP}                                    &  1           	                      & -           & -            \\ \hline
\hline
Policy \eqref{eq:time_index_policy} 	 & 0.9974  (0.0017)           & 0.9561 (0.0025)   & 0.9627 (0.0066)    \\ \hline
TD Ranking Policy \cite{torrico_etal18}   		 & 0.9901  (0.0080)           & 0.9539 (0.0025)   & 0.9529 (0.0073)    \\ \hline
\end{tabular}}
\caption{Summary of experiment results.}\label{table:ratios1}}
\end{center}
\end{table}

We know from our results in the previous section that the bound given by \eqref{eq:prob+J1LP} is guaranteed to outperform the bound given by \eqref{eq:obm_relax_prev} with \eqref{eq:right-star_prev}. However, our results show that the improvement is significant, with the new bound cutting the gap by about $ 4\% $ on average for small instances and approximately $3\%$ to $ 5\% $ for large ones. Furthermore, the improvement is consistent across all the tested instances; in particular, the two bounds never match.


The results for large, dense instances are particularly noteworthy; the new bound from \eqref{eq:prob+J1LP} also beats the max-weight expected off-line matching, not only on average but in every instance. Our intuition for this result is the following. In dense instances, there is likely a perfect or near-perfect matching in every realization, and thus the off-line matching will be very close to $ n $ in expectation. Of course, even in a dense instance it may be that no online policy can guarantee a perfect or near-perfect matching, and explicitly accounting for temporal aspects of the problem, particularly as inequalities \eqref{eq:ineq_Jsize_1} do, captures this phenomenon and tightens the bound, unlike the off-line matching or the more static approach of the benchmark LP.


Interestingly, our results also reveal that the bound from \eqref{eq:prob+J1LP} is not improved much with the addition of inequalities \eqref{eq:ineq_Jsize_2}, especially considering the significant additional computing time. 
In light of these results, we also performed experiments to test the bound given by \eqref{eq:prob_bound} and \eqref{eq:ineq_Jsize_2} only (without inequalities \eqref{eq:ineq_Jsize_1}). However, the resulting bounds were much looser, confirming that inequalities \eqref{eq:ineq_Jsize_1} are crucial to providing a tight bound.



In terms of policies, our new heuristic \eqref{eq:time_index_policy} is consistently better than the time-dependent ranking policy, the best performing policy from the literature. This improvement occurs in almost every tested instance, though the magnitude of the improvement varies. The new policy is near-optimal for small instances, and cuts the gap for large instances, by about $0.7\%$ to $1\%$ on average in absolute terms. This improvement in policy quality mirrors results in other areas, such as revenue management, where heuristic policies derived from time-indexed relaxations also outperform policies stemming from ``static'' LP's; see e.g.\ \cite{adelman07,zhang_adelman09}.

The results for regular graphs are in Table \ref{tab:cycles}, shown here in absolute terms since we are not averaging multiple experiments. We observe similar improvements in terms of upper bounds, where our new bound significantly cuts the gap, by around $7\%$. On the other hand, we observe no improvement on the policy side. Intuitively, this last result is unsurprising, since both heuristic policies depend on dual multipliers of LP's that are symmetric for regular instances, in the sense that they both have dual optimal solutions in which every value at some stage is equal. Both policies are thus choosing a match uniformly at random.
\begin{table}[h]
\begin{center}
{\small
 {\begin{tabular}{|c|c|c|c|c|c|}
\hline
Instance   & \eqref{eq:obm_relax_prev} + \eqref{eq:right-star_prev} & \eqref{eq:prob+J1LP}  &   Exp.\ Matching & 
 \eqref{eq:time_index_policy} & TDR Policy \cite{torrico_etal18} \\ \hline
3-regular & 95.2447 & 87.9224 & 85.5680 & 79.8960 & 79.8960 \\ \hline
4-regular & 98.3130 & 90.9901 & 89.2247 & 82.8647 & 82.8647 \\ \hline
5-regular & 99.4079 & 92.8303 & 91.5918 & 85.1153 & 85.1153 \\ \hline
6-regular & 99.7945 & 94.0548 & 93.2837 & 86.9821 & 86.9821 \\ \hline
\end{tabular}}
 \caption{Experiment results for regular graphs.}\label{tab:cycles}}
\end{center}
\end{table}


\section{Conclusions}\label{sec:conc}

This work proposes dynamic relaxations for the i.i.d.\ OBM problem and studies them from a polyhedral point of view. While several past results have used different LP relaxations, ours is the first to explicitly consider the time dimension. Among various benefits of the approach, this allows for the model to accommodate time-varying edge weights, and also allows us to elide the instance's structure in the analysis, by capturing all of this information in the problem's objective. Our study centers on the polyhedron of time-indexed achievable probabilities $Q$, and includes a large class of facet-defining inequalities for this polytope based on choosing complete bipartite subgraphs. Furthermore, our experiments confirm that the time-indexed approach offers significant benefits; the bound given by the simplest members of our proposed inequality family already significantly outperforms the best empirical bounds given by static LP's, and a heuristic policy derived from this new bound also significantly outperforms the best policy based on a static relaxation.


Our results motivate a variety of questions for future work. For example, we would like to understand the structure of valid inequalities that are not based on complete bipartite subgraphs, to potentially further improve the dual bound. Using Fourier-Motzkin elimination and the software PORTA, we have derived the full description of $Q$ for small cases, such as $ n = m = T = 3 $. We observed many different inequalities, including some that are somewhat similar to our general family \eqref{eq:dp_ineq}, so there may be a more general class to propose that still lends itself to analysis similar to ours. 

Much of the literature on OBM studies the worst-case performance of heuristic policies based on relaxations. Although that was not our goal in this work, the positive empirical results we observed when implementing our new heuristic policy suggest a similar analysis for that policy, especially since it appears to differ in structural terms from many OBM heuristics.
More generally, an interesting question is whether a polyhedral analysis similar to ours can be applied to derive new bounds and policies in related online matching and resource allocation contexts.



\section*{Acknowledgments}
The authors' work was partially supported by the National Science Foundation under grant CMMI 1552479.

\bibliographystyle{amsplain}
\bibliography{adp_combopt}

\providecommand{\bysame}{\leavevmode\hbox to3em{\hrulefill}\thinspace}
\providecommand{\MR}{\relax\ifhmode\unskip\space\fi MR }
\providecommand{\MRhref}[2]{%
  \href{http://www.ams.org/mathscinet-getitem?mr=#1}{#2}
}
\providecommand{\href}[2]{#2}
\begin{thebibliography}{10}

\bibitem{adelman07}
D.~Adelman, \emph{{Dynamic Bid Prices in Revenue Management}}, Operations
  Research \textbf{55} (2007), 647--661.

\bibitem{bahmani_kapralov10}
B.~Bahmani and M.~Kapralov, \emph{{Improved Bounds for Online Stochastic
  Matching}}, Proceedings of the 18th Annual European Conference on Algorithms:
  Part I, Springer-Verlag, 2010, pp.~170--181.

\bibitem{bertsimas_nino-mora96}
D.~Bertsimas and J.~Ni{\~n}o-Mora, \emph{{Conservation Laws, Extended
  Polymatroids and Multi-Armed Bandit Problems; A Polyhedral Approach to
  Indexable Systems}}, Mathematics of Operations Research \textbf{21} (1996),
  257--306.

\bibitem{birnbaum_mathieu08}
B.~Birnbaum and C.~Mathieu, \emph{{On-Line Bipartite Matching Made Simple}},
  ACM SIGACT News \textbf{39} (2008), 80--87.

\bibitem{brubach_etal17}
B.~Brubach, K.A. Sankararaman, A.~Srinivasan, and P.~Xu, \emph{Online
  stochastic matching: New algorithms and bounds}, arXiv:1606.06395, 2017.

\bibitem{coffman_mitrani80}
E.G. Coffman, Jr. and I.~Mitrani, \emph{{A Characterization of Waiting Time
  Performance Realizable by Single-Server Queues}}, Operations Research
  \textbf{28} (1980), 810--821.

\bibitem{dawande_etal96}
M.~Dawande, P.~Keskinocak, and S.~Tayur, \emph{{On the biclique problem in
  bipartite graphs}}, GSIA Working Paper 1996-04, Carnegie Mellon University,
  1996.

\bibitem{feldman_etal09}
J.~Feldman, A.~Mehta, V.~Mirrokni, and S.~Muthukrishnan, \emph{{Online
  Stochastic Matching: Beating $ 1 - 1/e $}}, {Proceedings of the 50th Annual
  IEEE Symposium on Foundations of Computer Science (FOCS)}, IEEE, 2009,
  pp.~117--126.

\bibitem{goel_mehta08}
G.~Goel and A.~Mehta, \emph{{Online Budgeted Matching in Random Input Models
  with Applications to Adwords}}, Proceedings of the 19th Annual ACM-SIAM
  Symposium on Discrete Algorithms, SODA '08, SIAM, 2008, pp.~982--991.

\bibitem{haeupler_etal11}
B.~Haeupler, V.S. Mirrokni, and M.~Zadimoghaddam, \emph{{Online Stochastic
  Weighted Matching: Improved Approximation Algorithms}}, Proceedings of the
  7th Workshop on Internet and Network Economics (WINE) (Berlin-Heidelberg)
  (N.~Chen, E.~Elkind, and E.~Koutsoupias, eds.), Springer-Verlag, 2011,
  pp.~170--181.

\bibitem{jaillet_lu14}
P.~Jaillet and X.~Lu, \emph{{Online Stochastic Matching: New Algorithms with
  Better Bounds}}, Mathematics of Operations Research \textbf{39} (2014),
  624--646.

\bibitem{karande_etal11}
Chinmay Karande, Aranyak Mehta, and Pushkar Tripathi, \emph{Online bipartite
  matching with unknown distributions}, Proceedings of the Forty-third Annual
  ACM Symposium on Theory of Computing, STOC '11, ACM, 2011, pp.~587--596.

\bibitem{karp_etal90}
R.M. Karp, U.V. Vazirani, and V.V. Vazirani, \emph{{An Optimal Algorithm for
  On-line Bipartite Matching}}, {Proceedings of the 22nd Annual ACM Symposium
  on the Theory of Computing (STOC)} (New York), ACM, 1990, pp.~352--358.

\bibitem{kra_simanca12}
I.~Kra and S.R. Simanca, \emph{On circulant matrices}, Notices of the AMS
  \textbf{59} (2012), 368--377.

\bibitem{kunnumkal_talluri16}
Sumit Kunnumkal and Kalyan Talluri, \emph{On a piecewise-linear approximation
  for network revenue management}, Mathematics of Operations Research
  \textbf{41} (2016), no.~1, 72--91.

\bibitem{mahdian_yan11}
M.~Mahdian and Q.~Yan, \emph{{Online Bipartite Matching with Random Arrivals:
  An Approach Based on Strongly Factor-Revealing LPs}}, Proceedings of the
  Forty-Third Annual ACM Symposium on Theory of Computing (STOC) (New York, NY,
  USA), ACM, 2011, pp.~597--606.

\bibitem{manshadi_etal12}
V.H. Manshadi, S.~Oveis~Gharan, and A.~Saberi, \emph{{Online Stochastic
  Matching: Online Actions Based on Offline Statistics}}, Mathematics of
  Operations Research \textbf{37} (2012), 559--573.

\bibitem{mehta13}
A.~Mehta, \emph{Online matching and ad allocation}, Foundations and Trends in
  Theoretical Computer Science \textbf{8} (2013), 265--368.

\bibitem{mehta_etal07}
A.~Mehta, A.~Saberi, U.~Vazirani, and V.~Vazirani, \emph{{Adwords and
  generalized online matching}}, Journal of the ACM \textbf{54} (2007),
  22:1--22:19.

\bibitem{puterman05}
M.~Puterman, \emph{Markov decision processes: Discrete stochastic dynamic
  programming}, John Wiley {\&} Sons, Inc., 2005.

\bibitem{raghvendra16}
S.~Raghvendra, \emph{{A Robust and Optimal Online Algorithm for Minimum Metric
  Bipartite Matching}}, {Proceedings of the 19th International Workshop on
  Approximation Algorithms for Combinatorial Optimization Problems and the 20th
  International Workshop on Randomization and Computation (APPROX/RANDOM 2016)}
  (K.~Jansen, C.~Matthieu, J.D.P. Rolim, and C.~Umans, eds.), Leibniz
  International Proceedings in Informatics (LIPIcs), 2016, pp.~18:1--18:16.

\bibitem{talluri_van_ryzin98}
K.T. Talluri and G.J. van Ryzin, \emph{{An analysis of bid-price controls for
  network revenue management}}, Management Science \textbf{44} (1998),
  1577--1593.

\bibitem{tong_topaloglu14}
C.~Tong and H.~Topaloglu, \emph{{On the approximate linear programming approach
  for network revenue management problems}}, INFORMS Journal on Computing
  \textbf{26} (2014), 121--134.

\bibitem{torrico_etal18}
Alfredo Torrico, Shabbir Ahmed, and Alejandro Toriello, \emph{A polyhedral
  approach to online bipartite matching}, Mathematical Programming \textbf{172}
  (2018), no.~1, 443--465.

\bibitem{vossen_zhang15}
T.~Vossen and D.~Zhang, \emph{{Reductions of Approximate Linear Programs for
  Network Revenue Management}}, Operations Research \textbf{63} (2015),
  1352--1371.

\bibitem{zhang_adelman09}
D.~Zhang and D.~Adelman, \emph{{An Approximate Dynamic Programming Approach to
  Network Revenue Management with Customer Choice}}, Transportation Science
  \textbf{43} (2009), 381--394.

\end{thebibliography}

\section{Appendix}

\subsection{Remaining Proofs}

\begin{proof}[Proof of Theorem \ref{theo:ineq_Jsize_gen}.]
The case $h=1$ is already covered by the proof of Proposition \ref{prop:ineq_Jsize_1}. Consider the case $h=n-1$ and $t=1$; the other cases follow a similar construction of linearly independent points. Let $J=\{0,\ldots, n-2\}$, and assume without loss of generality that $i_\tau=i$ for all $\tau\in[n-1]$. The specific inequality is
\begin{equation}\label{eq:validExt2-n-1}
Z_{N,J}^n+\sum_{\tau=1}^{n-1} n^{n-\tau} Z_{i,J}^\tau \leq 1+\sum_{\tau=1}^{n-2} n^{\tau}.
\end{equation}
We know that $z\in[0,1]^{n^3}$, but for the description of the points (and the proof) we will just consider the coordinates involved in the inequality, i.e., $z\in[0,1]^{p}$, where $p:=(2n-1)(n-1)$. For the rest of the points, the construction is similar to the one in the proof of Proposition \ref{prop:ineq_Jsize_1}. Recall that $e_{k,j}^{\tau}$ denotes the canonical vector in $[0,1]^{p}$, i.e. a vector with a 1 in coordinate $ (k, j, \tau ) $ and zero elsewhere, indicating a match of impression $k$ with ad $j$ in stage $\tau$. Consider the elements of $J$ as an $(n-1)$-tuple, i.e., $(0,\ldots,n-2)$. For $j\in J$, we define
\[
j+(0,\ldots,n-2):= (j,\ldots,j+n-2) \mod  (n-1).
\]
Any addition or substraction with $j\in J$ is modulo $(n-1)$ for the remainder of the proof. We denote the circulation of $J$ as the following set of $(n-1)$-tuples:
\begin{align*}
\circu(J)&:=\{j+(0,\ldots,n-2)\}_{j\in J}\\
&=\{(0,\ldots,n-2),(1,\ldots,n-2,0),\ldots,(n-2,0,\ldots,n-3)\}.
\end{align*}
Note that $\circu(J)$ can be viewed as a matrix. Each element of $\circu(J)$ corresponds to a sequence of ads in the process from stage $n$ to stage 1. Since we have $n$ stages and any of those sequences has size $n-1$, then clearly there is no matching in some stage or an element repeats. We now describe the family of linearly independent points.
\begin{enumerate}
\item[I.] Fix $k\in N$ and $j\in J$. In stage $n$, if node $k$ appears, then match it to node $j$, with probability $1/n$. For the remaining stages match according to  $(j,j+1,\ldots,j+n-2) \in \circu(J)$. In terms of probability, if $i$ appears in stage $n-1$, then it is matched to $j$ with probability $(1-1/n)\cdot 1/n$. For the rest, the probability is $1/n$. So, we have the point
 \begin{equation}\label{eq:family1}\frac{1}{n}e_{k,j}^n+\frac{1}{n}\left(1-\frac{1}{n}\right)e_{i,j}^{n-1}+\frac{1}{n}\sum_{\tau=1}^{n-2}e_{i,j+\tau}^{n-1-\tau}.\end{equation}
By a simple calculation, it is easy to see that each of these points achieves the righ-hand side of \eqref{eq:validExt2-n-1}. Since we chose an arbitrary $k\in N$ and $j\in J$, we have $n(n-1)$ points in this family.

\item[II.] Fix $j\in J$. In this family we repeat $ j $ in stages $n-1$ and $n-2$. If $i$ appears in stage $n-1$, match it to node $j$ with probability $1/n$. If $i$ appears in stage $n-2$ and it did not appear in $n-1$, match it to $j$ with probability $(1-1/n)\cdot 1/n$. For the remaining stages match according to $(j+n-2,j,j+1,\ldots,j+n-3) \in \circu(J)$; in particular, in stage $n$ match any $k\in N$ that appears with node $j+n-2$, in stage $ n-3 $ match $i$ to $j+1$ if it appears, and so forth. So, we have the point
\begin{equation}\label{eq:family2}
\frac{1}{n}\sum_{k\in N}e_{k,j+n-2}^n+\frac{1}{n}e^{n-1}_{i,j}+\frac{1}{n}\left(1-\frac{1}{n}\right)e_{i,j}^{n-2}+\frac{1}{n}\sum_{\tau=1}^{n-3}e_{i,j+\tau}^{n-\tau-2}
\end{equation}
By a simple calculation, we get the right-hand side of \eqref{eq:validExt2-n-1}. Since we chose an arbitrary $j\in J$, we have $n-1$ points in this family.

\item[III.] Fix $j\in J$; in this family we have two different options in stage $n-3$. 
If $i$ appears in stage $n-1$, match it to $j$ with probability $1/n$. If $i$ appears in stage $n-2$, match it to $j+1$, also with probability $1/n$. If $i$ appears in stage $n-3$, match it to $ j+1 $ with probability $(1-1/n)\cdot 1/n$, or if $ j+1 $ was matched in stage $n-2$, then to node $j$ with probability $(1-1/n)\cdot 1/n^2$.
For the remaining stages match according to $(j+n-2,j,j+1,\ldots,j+n-3)$; in stage $n$ match any $k\in N$ that appears to $j+n-2$, in stage $ n - 4 $ match $i$ to $ j+2 $ if it appears, and so forth. So, we have the point
\begin{align}\label{eq:family3}\frac{1}{n}\sum_{k\in N}e_{k,j+n-2}^n&+\frac{1}{n}e^{n-1}_{i,j}+\frac{1}{n}e_{i,j+1}^{n-2} \notag \\
&+\left(1-\frac{1}{n}\right)\left[\frac{1}{n^2}e_{i,j}^{n-3}+\frac{1}{n}e_{i,j+1}^{n-3}\right]+\frac{1}{n}\sum_{\tau=1}^{n-4}e_{i,j+\tau+1}^{n-\tau-3}
\end{align}
By a simple calculation, we get the right-hand side of \eqref{eq:validExt2-n-1}. Since we chose an arbitrary $j\in J$, we have $n-1$ points in this family.

\item[IV.] Fix $j\in J$ and stage $s\in [n-4]$; the previous family can be generalized for stage $s$, but increasing the number of options, i.e., in stage $s$ we have $n-s-1$ options from the previous stages.  
    If $i$ appears in stage $n-1$, match it to $j$ with probability $1/n$, if $i$ appears in stage $n-2$,  match it to $j+1$ with probability $1/n$, and  
    continue in this way until stage $s+1$, where if $i$ appears, match it to node $j+n-s-2$ with probability $1/n$. If $i$ appears in stage $s$, we consider ads $ ( j+n-s-2, \dotsc, j+1, j ) $ in this order of priority, so that $i$ is matched to $j+n-s-2$ with probability $(1-1/n)\cdot 1/n$; each subsequent ad's probability of being matched to $i$ decreases exponentially until $j$, which has probability $(1-1/n)\cdot 1/n^{n-s-1}$.
     For the remaining stages (including stage $n$) match according to 
     $(j+n-2,j,j+1,\ldots,j+n-3)$; in stage $n$ match any $k\in N$ that appears with $j+n-2$, in $s-1$ match $i$ to $ j+ n - s - 1 $ if it appears, etc. So, we have the point
\begin{align}\label{eq:family4}
\frac{1}{n}\sum_{k\in N}&e_{k,j+n-2}^n+\frac{1}{n}\sum_{\tau=0}^{n-s-2}e^{n-\tau-1}_{i,j+\tau}\notag \\
&+\left(1-\frac{1}{n}\right)\left[\sum_{\tau=0}^{n-s-2}\frac{1}{n^{n-\tau-s-1}}e^{s}_{i,j+\tau}\right]+\frac{1}{n}\sum_{\tau=1}^{s-1}e_{i,j+n-s-2+\tau}^{s-\tau}
\end{align}
The left-hand side of \eqref{eq:validExt2-n-1} evaluated at this point is
$$1+\sum_{\tau=s+1}^{n-1} \frac{n^{n-\tau}}{n}+\left(1-\frac{1}{n}\right)\sum_{\tau=0}^{n-s-2}\frac{n^{n-s}}{n^{n-\tau-s-1}}+\sum_{\tau=1}^{s-1}\frac{n^{n-\tau}}{n} =1+\sum_{\tau=1}^{n-2} n^{\tau}.$$
Finally, since we chose an arbitrary $j\in J$ and $s\in[n-4]$, we have $(n-1)(n-4)$ points in this family.

\item[V.]  Fix $j\in J$. For this family we do not match in stage $n$, and in the remaining stages we match according to $(j,j+1,\ldots,j+n-2) \in \circu(J)$. If $i$ appears in stage $n-1$ match it to $j$ with probability $1/n$, if $i$ appears in stage $n-2$,  match it to $j+1$, and so on. So we have the point
\begin{equation}\label{eq:family5}
\frac{1}{n}\sum_{\tau=0}^{n-2}e_{i,j+\tau}^{n-\tau-1}
\end{equation}
By a simple calculation, we get the right-hand side of \eqref{eq:validExt2-n-1}. Finally, since we chose an arbitrary $j\in J$, then we have $n-1$ points in this family.
\end{enumerate}
With these families, we have $p$ points in total. Denote by $(k,j,\tau)$ the index of a vector $z\in[0,1]^p$, which indicates that  $k\in N$ is matched to $j\in J$ in stage $\tau$. In any of these points consider the following order of components (starting from the first one): $(1,0,n)$, $(1,1,n)$, $\ldots$, $(1,n-2,n)$, $\ldots$, $(n,0,n)$, $\ldots$, $(n,n-2,n)$, $(i,0,n-1)$, $\ldots$, $(i,n-2,n-1)$, $\ldots$, $(i,0,1)$, $\ldots$, $(i,n-2,1)$.

The rest of the proof consists of showing that these families define a set of linearly independent points, and we prove this using Gaussian elimination. Arrange these points as column vectors in a matrix $A$,
$$
A=[\text{I} , \text{II}, \text{III}, \text{IV}, \text{V}]
=\begin{pmatrix}B_1 & B_2 \\ B_3 & B_4\end{pmatrix},
$$
where $B_1$ is a $n(n-1)\times n(n-1)$ diagonal matrix with entries $1/n$. These columns can be used to make $B_2$ a zero matrix, yielding
$$
\bar{A}=\begin{pmatrix}B_1 & 0 \\ B_3 & C\end{pmatrix}.
$$
Consider how the columns from families II, III, and IV look like after this elimination procedure (family V is not affected).
Fix $g\in J$ and sum every point \eqref{eq:family1} over $k\in N$; this yields
\begin{equation}\label{eq:family1a}
\frac{1}{n}\sum_{k\in N}e_{k,g}^n+\left(1-\frac{1}{n}\right)e_{i,g}^{n-1}+\sum_{\tau=1}^{n-2}e_{i,g+\tau}^{n-1-\tau} .
\end{equation}

\begin{enumerate}
\item[II$^a$.] Pick the point \eqref{eq:family2} associated with $g+1\in J$,
 \begin{equation}\label{eq:family2a}
\frac{1}{n}\sum_{k\in N}e_{k,g}^n+\frac{1}{n}e^{n-1}_{i,g+1}+\frac{1}{n}\left(1-\frac{1}{n}\right)e_{i,g+1}^{n-2}+\frac{1}{n}\sum_{\tau=1}^{n-3}e_{i,g+1+\tau}^{n-\tau-2} .
\end{equation}
Subtract \eqref{eq:family1a} from \eqref{eq:family2a} to get
$$
\frac{1}{n}e^{n-1}_{i,g+1}+\frac{1}{n}\left(1-\frac{1}{n}\right)e_{i,g+1}^{n-2}+\frac{1}{n}\sum_{\tau=1}^{n-3}e_{i,g+1+\tau}^{n-\tau-2}-\left(1-\frac{1}{n}\right)e_{i,g}^{n-1}-\sum_{\tau=1}^{n-2}e_{i,g+\tau}^{n-1-\tau} ,
$$
which is equivalent to
\begin{equation}\label{eq:family2b}
\frac{1}{n} e^{n-1}_{i,g+1} + \left(-1+\frac{1}{n}\right)e_{i,g}^{n-1}+\left(\frac{1}{n}-\frac{1}{n^2}-1\right)e_{i,g+1}^{n-2}+\left(-1+\frac{1}{n}\right)\sum_{\tau=2}^{n-2}e_{i,g+\tau}^{n-1-\tau} .
\end{equation}

\item[III$^a$.] Pick the point \eqref{eq:family3} associated with $g+1\in J$,
\begin{align}\label{eq:family3a}
\frac{1}{n}\sum_{k\in N}e_{k,g}^n&+\frac{1}{n}e^{n-1}_{i,g+1}+\frac{1}{n}e_{i,g+2}^{n-2}  \notag \\
&
+\left(1-\frac{1}{n}\right)\left[\frac{1}{n^2}e_{i,g+1}^{n-3}+\frac{1}{n}e_{i,g+2}^{n-3}\right]+\frac{1}{n}\sum_{\tau=1}^{n-4}e_{i,g+\tau+2}^{n-\tau-3} .
\end{align}
Subtract \eqref{eq:family1a} from \eqref{eq:family3a} to get
\begin{align}\label{eq:family3b}
& \begin{split}
&\left(-1+\frac{1}{n}\right)e_{i,g}^{n-1} +\frac{1}{n}e^{n-1}_{i,g+1}+\frac{1}{n}e_{i,g+2}^{n-2}-e_{i,g+1}^{n-2} \\
&+\left(1-\frac{1}{n}\right)\left[\frac{1}{n^2}e_{i,g+1}^{n-3} 
+\frac{1}{n}e_{i,g+2}^{n-3}\right] -e_{i,g+2}^{n-3}+\left(-1+\frac{1}{n}\right)\sum_{\tau=3}^{n-2}e_{i,g+\tau}^{n-1-\tau} .
\end{split}
\end{align}

\item[IV$^a$.] Pick the point \eqref{eq:family4} associated with $g+1\in J$ and any $s\in[n-4]$,
\begin{align}\label{eq:family4a}
\begin{split}
& \frac{1}{n}\sum_{k\in N}e_{k,g}^n +\sum_{\tau=0}^{n-s-2}\frac{1}{n}e^{n-\tau-1}_{i,g+\tau+1} \\
&+\left(1-\frac{1}{n}\right)\left[\sum_{\tau=0}^{n-s-2}\frac{1}{n^{n-\tau-s-1}}e^{s}_{i,g+\tau+1}\right]+\frac{1}{n}\sum_{\tau=1}^{s-1}e_{i,g+n-1-s+\tau}^{s-\tau} .
\end{split}
\end{align}
Subtract \eqref{eq:family1a} from \eqref{eq:family4a} to get
\begin{align*}
\sum_{\tau=0}^{n-s-2}\frac{1}{n}e^{n-\tau-1}_{i,g+\tau+1}&+\left(1-\frac{1}{n}\right)\left[\sum_{\tau=0}^{n-s-2}\frac{1}{n^{n-\tau-s-1}}e^{s}_{i,g+\tau+1}\right]  \\
&+\frac{1}{n}\sum_{\tau=1}^{s-1}e_{i,g+n-1-s+\tau}^{s-\tau}-\left(1-\frac{1}{n}\right)e_{i,g}^{n-1}-\sum_{\tau=1}^{n-2}e_{i,g+\tau}^{n-1-\tau},
\end{align*}
which is equivalent to
\begin{align}\label{eq:family4b}
\begin{split}
& \left(-1+\frac{1}{n}\right)e_{i,g}^{n-1} +\frac{1}{n}e^{n-1}_{i,g+1}+\sum_{\tau=1}^{n-s-2}\left[\frac{1}{n}e^{n-\tau-1}_{i,g+\tau+1}-e_{i,g+\tau}^{n-\tau-1}\right]  \\
&+\left(1-\frac{1}{n}\right)\left[\sum_{\tau=0}^{n-s-2}\frac{1}{n^{n-\tau-s-1}}e^{s}_{i,g+\tau+1}\right] -e_{i,g+n-s-1}^s +\left(-1+\frac{1}{n}\right)\sum_{\tau=n-s}^{n-2}e_{i,g+\tau}^{n-1-\tau}.
\end{split}
\end{align}
\end{enumerate}

Since $B_1$ is a diagonal matrix, for the rest of the proof we focus on the matrix $C$, formed by points in families II$^a$, III$^a$, IV$^a$ and V. Next, we apply Gaussian elimination on $C$.
\begin{enumerate}
\item[III$^b$.] Subtract \eqref{eq:family2b} from \eqref{eq:family3b},
\begin{align*}
\left(-1+\frac{1}{n}\right)e_{i,g}^{n-1}&+\frac{1}{n}e^{n-1}_{i,g+1}+\frac{1}{n}e_{i,g+2}^{n-2}-e_{i,g+1}^{n-2} +\left(1-\frac{1}{n}\right)\left[\frac{1}{n^2}e_{i,g+1}^{n-3}\right. \\
&\left.+\frac{1}{n}e_{i,g+2}^{n-3}\right] -e_{i,g+2}^{n-3}+\left(-1+\frac{1}{n}\right)\sum_{\tau=3}^{n-2}e_{i,g+\tau}^{n-1-\tau}\\
&-\frac{1}{n}e^{n-1}_{i,g+1}-\left(-1+\frac{1}{n}\right)e_{i,g}^{n-1}-\left(\frac{1}{n}-\frac{1}{n^2}-1\right)e_{i,g+1}^{n-2}\\
&-\left(-1+\frac{1}{n}\right)\sum_{\tau=2}^{n-2}e_{i,g+\tau}^{n-1-\tau} ,
\end{align*}
which is equivalent to
\begin{equation}\label{eq:family3bb}
\left(-\frac{1}{n}+\frac{1}{n^2}\right)e_{i,g+1}^{n-2}+\frac{1}{n}e_{i,g+2}^{n-2}+\left(\frac{1}{n^2}-\frac{1}{n^3}\right)e_{i,g+1}^{n-3}-\frac{1}{n^2}e_{i,g+2}^{n-3} .
\end{equation}

\item[IV$^b$.] For every $s\in[n-4]$, subtract \eqref{eq:family2b} from \eqref{eq:family4b},
\begin{align*}
\left(-1+\frac{1}{n}\right)e_{i,g}^{n-1}&+\frac{1}{n}e^{n-1}_{i,g+1}+\sum_{\tau=1}^{n-s-2}\left[\frac{1}{n}e^{n-\tau-1}_{i,g+\tau+1}-e_{i,g+\tau}^{n-\tau-1}\right]  \notag\\
&+\left(1-\frac{1}{n}\right)\left[\sum_{\tau=0}^{n-s-2}\frac{1}{n^{n-\tau-s-1}}e^{s}_{i,g+\tau+1}\right] -e_{i,g+n-s-1}^s\notag \\
&+\left(-1+\frac{1}{n}\right)\sum_{\tau=n-s}^{n-2}e_{i,g+\tau}^{n-1-\tau}\\
&-\frac{1}{n}e^{n-1}_{i,g+1}-\left(-1+\frac{1}{n}\right)e_{i,g}^{n-1}-\left(\frac{1}{n}-\frac{1}{n^2}-1\right)e_{i,g+1}^{n-2}\\
&-\left(-1+\frac{1}{n}\right)\sum_{\tau=2}^{n-2}e_{i,g+\tau}^{n-1-\tau} ,
\end{align*}
which is equivalent to
\begin{align}\label{eq:family4bb}
\left(-\frac{1}{n}+\frac{1}{n^2}\right)&e_{i,g+1}^{n-2}+\frac{1}{n}e_{i,g+2}^{n-2}+\frac{1}{n}\sum_{\tau=2}^{n-s-2}\left[e^{n-\tau-1}_{i,g+\tau+1}-e_{i,g+\tau}^{n-\tau-1}\right]  \notag\\
&+\left(1-\frac{1}{n}\right)\left[\sum_{\tau=0}^{n-s-3}\frac{1}{n^{n-\tau-s-1}}e^{s}_{i,g+\tau+1}\right] -\frac{1}{n^2}e_{i,g+n-s-1}^s .
\end{align}

\item[IV$^c$.] For every $s\in[n-4]$, subtract \eqref{eq:family3bb} from \eqref{eq:family4bb},
\begin{align*}
\left(-\frac{1}{n}+\frac{1}{n^2}\right)e_{i,g+1}^{n-2}&+\frac{1}{n}e_{i,g+2}^{n-2}+\frac{1}{n}\sum_{\tau=2}^{n-s-2}\left[e^{n-\tau-1}_{i,g+\tau+1}-e_{i,g+\tau}^{n-\tau-1}\right]  \notag\\
&+\left(1-\frac{1}{n}\right)\left[\sum_{\tau=0}^{n-s-3}\frac{1}{n^{n-\tau-s-1}}e^{s}_{i,g+\tau+1}\right] -\frac{1}{n^2}e_{i,g+n-s-1}^s\\
&-\left(-\frac{1}{n}+\frac{1}{n^2}\right)e_{i,g+1}^{n-2}-\frac{1}{n}e_{i,g+2}^{n-2}-\left(\frac{1}{n^2}-\frac{1}{n^3}\right)e_{i,g+1}^{n-3}+\frac{1}{n^2}e_{i,g+2}^{n-3} ,
\end{align*}
which is equivalent to
\begin{align}\label{eq:family4cc}
\frac{1}{n}e^{n-3}_{i,g+3}&-\left(\frac{1}{n^2}-\frac{1}{n^3}\right)e_{i,g+1}^{n-3}+\left(\frac{1}{n^2}-\frac{1}{n}\right)e_{i,g+2}^{n-3}+\frac{1}{n}\sum_{\tau=3}^{n-s-2}\left[e^{n-\tau-1}_{i,g+\tau+1}-e_{i,g+\tau}^{n-\tau-1}\right]  \notag\\
&+\left(1-\frac{1}{n}\right)\left[\sum_{\tau=0}^{n-s-3}\frac{1}{n^{n-\tau-s-1}}e^{s}_{i,g+\tau+1}\right] -\frac{1}{n^2}e_{i,g+n-s-1}^s .
\end{align}

\item[IV$^d$.] For every $s\in[n-5]$, subtract \eqref{eq:family4cc} corresponding to $s+1$ from \eqref{eq:family4cc} corresponding to $s$,
\begin{align*}
\frac{1}{n}e^{n-3}_{i,g+3}&-\left(\frac{1}{n^2}-\frac{1}{n^3}\right)e_{i,g+1}^{n-3}+\left(\frac{1}{n^2}-\frac{1}{n}\right)e_{i,g+2}^{n-3}+\frac{1}{n}\sum_{\tau=3}^{n-s-2}\left[e^{n-\tau-1}_{i,g+\tau+1}-e_{i,g+\tau}^{n-\tau-1}\right]  \notag\\
&+\left(1-\frac{1}{n}\right)\left[\sum_{\tau=0}^{n-s-3}\frac{1}{n^{n-\tau-s-1}}e^{s}_{i,g+\tau+1}\right] -\frac{1}{n^2}e_{i,g+n-s-1}^s\\
&-\frac{1}{n}e^{n-3}_{i,g+3}+\left(\frac{1}{n^2}-\frac{1}{n^3}\right)e_{i,g+1}^{n-3}-\left(\frac{1}{n^2}-\frac{1}{n}\right)e_{i,g+2}^{n-3}\\ &-\frac{1}{n}\sum_{\tau=3}^{n-s-3}\left[e^{n-\tau-1}_{i,g+\tau+1}-e_{i,g+\tau}^{n-\tau-1}\right]  \notag\\
&-\left(1-\frac{1}{n}\right)\left[\sum_{\tau=0}^{n-s-4}\frac{1}{n^{n-\tau-s-2}}e^{s+1}_{i,g+\tau+1}\right] +\frac{1}{n^2}e_{i,g+n-s-2}^{s+1} ,
\end{align*}
which is equivalent to
\begin{align}\label{eq:family4dd}
\frac{1}{n}&e^{s+1}_{i,g+n-s-1}+\left(\frac{1}{n^2}-\frac{1}{n}\right)e_{i,g+n-s-2}^{s+1}+\left(-1+\frac{1}{n}\right)\left[\sum_{\tau=0}^{n-s-4}\frac{1}{n^{n-\tau-s-2}}e^{s+1}_{i,g+\tau+1}\right] \notag\\
& \qquad -\frac{1}{n^2}e_{i,g+n-s-1}^s+\left(\frac{1}{n^{2}}-\frac{1}{n^{3}}\right)e^{s}_{i,g+n-s-2} \notag\\
& \qquad +\left(1-\frac{1}{n}\right)\left[\sum_{\tau=0}^{n-s-4}\frac{1}{n^{n-\tau-s-1}}e^{s}_{i,g+\tau+1}\right] .
\end{align}
For $s=n-4$, we do not need this step, since from \eqref{eq:family4cc} we have
\begin{align*}
\frac{1}{n}e^{n-3}_{i,g+3}&+\left(\frac{1}{n^2}-\frac{1}{n}\right)e_{i,g+2}^{n-3}+\left(-\frac{1}{n^2}+\frac{1}{n^3}\right)e_{i,g+1}^{n-3}\\
&-\frac{1}{n^2}e_{i,g+3}^{n-4}+\left(\frac{1}{n^2}-\frac{1}{n^3}\right)e^{n-4}_{i,g+2}+\left(\frac{1}{n^3}-\frac{1}{n^4}\right)e^{n-4}_{i,g+1} .
\end{align*}
Observe that for any $s\in[n-4]$ and $g\in J$, we can multiply row $(i,g,s+1)$ by $-1/n$ and we get the entry in row $(i,g,s)$.

\item[II$^b$.] Finally, pick a point \eqref{eq:family5} in family V for $g\in J$,
\begin{equation}\label{eq:family2aa}
\frac{1}{n}e_{i,g}^{n-1}+\frac{1}{n}e_{i,g+1}^{n-2}+\cdots+\frac{1}{n}e_{i,g+n-2}^1 .
\end{equation}
Now multiply \eqref{eq:family2aa} by $(1-n)$ and subtract it from \eqref{eq:family2b} for $g\in J$, yielding
\begin{align*}
\frac{1}{n}e^{n-1}_{i,g+1}+\left(-1+\frac{1}{n}\right)e_{i,g}^{n-1}&+\left(\frac{1}{n}-\frac{1}{n^2}-1\right)e_{i,g+1}^{n-2} \\
&+\left(-1+\frac{1}{n}\right)\sum_{\tau=2}^{n-2}e_{i,g+\tau}^{n-1-\tau}-\frac{1-n}{n}\sum_{\tau=0}^{n-2}e_{i,g+\tau}^{n-\tau-1} ,
\end{align*}
which is equivalent to
$$
\frac{1}{n}e^{n-1}_{i,g+1}-\frac{1}{n^2}e_{i,g+1}^{n-2}.
$$
Since we have $g+1$ in those two stages, we have a general expression for any $g\in J$,
\begin{equation}\label{eq:family2bb}
\frac{1}{n}e^{n-1}_{i,g}-\frac{1}{n^2}e_{i,g}^{n-2}.
\end{equation}
As before, we can multiply row $(i,g,n-1)$ by $-1/n$  to get the entry in row $(i,g,n-2)$.
\end{enumerate}

Now, we can organize the points in $C$ as
$$C=[\text{V}, \text{II}^b, \text{III}^b, \text{IV}^d_{n-4},\ldots, \text{IV}^d_s, \ldots, \text{IV}^d_1],$$
where $\text{IV}^d_s$ corresponds to the block of points ($g\in J$) with $s\in[n-4]$. $C$ has the form
$$C=
\begin{pmatrix}
C_{n-1} & D_{n-2} & 0 & 0& 0& \ldots &0&0 \\
C_{n-2} & -\frac{1}{n}D_{n-2} & D_{n-3} & 0 &0&\ldots&0&0\\
C_{n-3} & 0 & -\frac{1}{n}D_{n-3} & D_{n-4} &0&\ldots&0&0\\
\vdots & & &\ddots &&& \vdots\\
C_{2} & 0 & 0 & 0 &0&\ldots&-\frac{1}{n}D_{2}&D_{1}\\
C_{1} & 0 & 0 & 0 &0&\ldots&0&-\frac{1}{n}D_{1}\\
\end{pmatrix},$$
where every $C_i$ and $D_i$ are \emph{circulant} matrices \cite{kra_simanca12} of size $(n-1)\times(n-1)$. Since the determinant is invariant under elementary row and column operations, we can perform Gaussian elimination (of rows) from bottom to top, and we get
$$\bar{C}=
\begin{pmatrix}
\bar{C}_{n-1} & 0 & 0 & 0& 0& \ldots &0&0 \\
\bar{C}_{n-2} & -\frac{1}{n}D_{n-2} & 0 & 0 &0&\ldots&0&0\\
\bar{C}_{n-3} & 0 & -\frac{1}{n}D_{n-3} & 0 &0&\ldots&0&0\\
\vdots & & &\ddots &&& \vdots\\
\bar{C}_{2} & 0 & 0 & 0 &0&\ldots&-\frac{1}{n}D_{2}&0\\
C_{1} & 0 & 0 & 0 &0&\ldots&0&-\frac{1}{n}D_{1}\\
\end{pmatrix},$$
where
\begin{align*}
\bar{C}_1&=C_1+n(C_2-\cdots n(C_{n-2}+nC_{n-1})\cdots)\\
&=\circu(1/n,1,n,n^2,\ldots,n^{n-3}),\\
D_{n-2}&=\circu(1/n,0,\ldots0),\\
D_{n-3}&=\circu\left(\frac{1}{n},-\frac{1}{n}+\frac{1}{n^2},0,\ldots0\right)\\
&\vdots\\
D_s&=\circu\left(\frac{1}{n},-\frac{1}{n}+\frac{1}{n^2},-\frac{1}{n^2}+\frac{1}{n^3},\ldots,-\frac{1}{n^{n-s-2}}+\frac{1}{n^{n-s-1}},0,\ldots0\right)\\
&\vdots\\
D_1&=\circu\left(\frac{1}{n},-\frac{1}{n}+\frac{1}{n^2},-\frac{1}{n^2}+\frac{1}{n^3},\ldots,-\frac{1}{n^{n-3}}+\frac{1}{n^{n-2}}\right)
\end{align*}
For $\bar{C}_1$, the last entry, $n^{n-3}$, is greater than the sum of the remaining entries. The same applies for $D_s$, with entry $1/n$. Due to Proposition 18 in \cite{kra_simanca12} all these matrices are nonsingular, so $\bar{C}$ is nonsingular, and therefore $C$ is nonsingular. This implies that $A$ is nonsingular, showing that these points are linearly independent, and proceeding in the same way as we did in the proof of Proposition \ref{prop:ineq_Jsize_1} for the remaining components, we can show \eqref{eq:validExt2-n-1} is facet-defining.
\end{proof}

\begin{proof}[Proof of Theorem \ref{theo:ineq_IJ_gen}.]
The proof is similar to Theorem \ref{theo:ineq_Jsize_gen}. Again, assume $h=n-1$ and $t=1$; the remaining cases follow a similar construction of linearly independent points. Without loss of generality we can assume that $i_\tau =i$ for all $\tau \in [n-2]$. So, we have an inequality of the form
\begin{equation}\label{eq:validExt2-genI-n-1}
r Z_{N,J}^n+n Z_{I,J}^{n-1}+\sum_{\tau=1}^{n-2} n^{n-\tau} Z_{i,J}^{\tau}\leq r+\sum_{\tau=1}^{n-2} n^{\tau}.
\end{equation}
We construct the following linearly independent points.
\begin{enumerate}
\item[I.] Fix $k\in N$ and $j\in J$. In stage $n$, if node $k$ appears, match it to node $j$, with probability $1/n$. For the remaining stages match according to $(j,j+1,\ldots,j+n-2) \in \circu(J)$. In terms of probability, if any $k'\in I$ appears in stage $n-1$, then it is matched to $j$ with probability $(1-1/n)\cdot 1/n$, so the probability of matching $j$ in stage $n-1$ is $(1-1/n)\cdot r/n$. For the rest, the probability is $1/n$. So, we have the point
 \begin{equation}\label{eq:family1teo2}\frac{1}{n}e_{k,j}^n+\frac{1}{n}\left(1-\frac{1}{n}\right)\sum_{k'\in I}e_{k',j}^{n-1}+\frac{1}{n}\sum_{\tau=1}^{n-2}e_{i,j+\tau}^{n-1-\tau}.\end{equation}
By a simple calculation, it is easy to see that each of these points achieves the right-hand side of \eqref{eq:validExt2-genI-n-1}. Since we chose any arbitrary $k\in N$ and $j\in J$, we have $n(n-1)$ points in this family.

\item[II.] Fix $j\in J$ and $k\in I$. In this family we repeat the same ad to match in stages $n-1$ and $n-2$. 
If $k$ appears in stage $n-1$, match it to $j$ with probability $1/n$. Then, if $i$ appears in stage $n-2$ and $k$ did not appear in $ n-1 $, match it to $j$ with probability $(1-1/n)\cdot 1/n$. For the remaining stages match according to $(j+n-2,j,j+1,\ldots,j+n-3) \in \circu(J)$; in stage $n$ match any $k'\in N$ that appears with $j+n-2$, in stage $n-3$ match $i$ to $ j + 1 $ if it appears, and so on. So, we have the point
\begin{equation}\label{eq:family2teo2}
\frac{1}{n}\sum_{k'\in N}e_{k',j+n-2}^n+\frac{1}{n}e^{n-1}_{k,j}+\frac{1}{n}\left(1-\frac{1}{n}\right)e_{i,j}^{n-2}+\frac{1}{n}\sum_{\tau=1}^{n-3}e_{i,j+\tau}^{n-\tau-2} .
\end{equation}
By a simple calculation, we get the right-hand side of \eqref{eq:validExt2-n-1}. Finally, since we chose an arbitrary $j\in J$ and $k'\in I$, we have $r(n-1)$ points in this family.

\item[V.]  Fix $j\in J$. In this family we do not match in stage $n$, and in the remaining stages we match according to a vector in $\circu(J)$. If any $k\in I$ appears in stage $n-1$, match it to $j$ with probability $1/n$, if $i$ appears in stage $n-2$, match it to node $j+1$, and so forth. 
    So we have the point
\begin{equation}\label{eq:family5teo2}
\frac{1}{n}\sum_{k\in I}e_{k,j}^{n-1}+\frac{1}{n}\sum_{\tau=1}^{n-2}e_{i,j+\tau}^{n-\tau-1} .
\end{equation}
By a simple calculation, we get the right-hand side of \eqref{eq:validExt2-n-1}. Since we chose an arbitrary $j\in J$, we have $n-1$ points in this family.
\end{enumerate}
Families III, and IV remain the same as in the proof of Theorem \ref{theo:ineq_Jsize_gen}, so in total we have $(n-1)(r+2n-2)$ points. The rest of the proof follows the same argument as Theorem \ref{theo:ineq_Jsize_gen}.
\end{proof}

\begin{proof}[Proof of Theorem \ref{theo:ineq_general}.]
The proof follows the same argument as the previous two theorems, the only difference being that the collection of points given by policies corresponding to $I$ is bigger; however, they still form a diagonal block, so we can apply the same procedure.
\end{proof}

\subsection{Detailed Experiment Results}

 \begin{table}[ht]
\begin{center}
{\footnotesize
 {\begin{tabular}{|c|c|c|c|c|c|c|c|}
\hline
Instance   & \eqref{eq:obm_relax_prev} + \eqref{eq:right-star_prev} & \eqref{eq:prob+J1LP} & \eqref{eq:prob+J1LP} + \eqref{eq:ineq_Jsize_2} &  Exp.\ Matching & \eqref{eq:DP}     & \eqref{eq:time_index_policy} & TDR Policy \cite{torrico_etal18} \\ \hline
S1  & 8.9613 & 8.7261 & 8.7170 & 8.5818 & 8.3976 & 8.3562 & 8.3452 \\ \hline
S2  & 8.0125 & 7.7354 & 7.7147 & 7.4635 & 7.4055 & 7.4095 & 7.3980 \\ \hline
S3  & 7.0252 & 6.8849 & 6.8795 & 6.8109 & 6.7241 & 6.7239 & 6.6546 \\ \hline
S4  & 8.4799 & 8.1415 & 8.1282 & 8.1360 & 7.8468 & 7.8142 & 7.7895 \\ \hline
S5  & 7.9805 & 7.4972 & 7.4654 & 7.3191 & 7.0849 & 7.0590 & 6.8223 \\ \hline
S6  & 9.2650 & 8.5601 & 8.5289 & 8.4907 & 8.1364 & 8.1057 & 8.0802 \\ \hline
S7  & 7.1327 & 6.9320 & 6.9155 & 6.7885 & 6.6895 & 6.6699 & 6.6709 \\ \hline
S8  & 8.2993 & 7.8525 & 7.8181 & 7.6588 & 7.4762 & 7.4631 & 7.4203 \\ \hline
S9  & 7.0193 & 6.6523 & 6.6320 & 6.4263 & 6.3451 & 6.3364 & 6.2574 \\ \hline
S10 & 7.1206 & 6.9756 & 6.9700 & 6.9593 & 6.8103 & 6.7790 & 6.7588 \\ \hline
S11 & 8.9684 & 8.5841 & 8.5519 & 8.3237 & 8.1223 & 8.0996 & 7.9973 \\ \hline
S12 & 6.7110 & 6.4682 & 6.4537 & 6.3265 & 6.2555 & 6.2542 & 6.2542 \\ \hline
S13 & 7.8013 & 7.5626 & 7.5543 & 7.7048 & 7.3845 & 7.3547 & 7.3122 \\ \hline
S14 & 8.0801 & 7.7047 & 7.6880 & 7.5756 & 7.4003 & 7.3787 & 7.2813 \\ \hline
S15 & 8.2220 & 8.0324 & 7.9922 & 7.9251 & 7.7369 & 7.7026 & 7.7021 \\ \hline
S16 & 9.3714 & 8.8684 & 8.8560 & 8.9708 & 8.4948 & 8.4672 & 8.4034 \\ \hline
S17 & 6.9684 & 6.8016 & 6.7739 & 6.6437 & 6.5785 & 6.5717 & 6.4977 \\ \hline
S18 & 9.3936 & 8.8414 & 8.8248 & 8.8218 & 8.4484 & 8.4247 & 8.3458 \\ \hline
S19 & 7.0304 & 6.7305 & 6.7138 & 6.5861 & 6.4776 & 6.4778 & 6.4772 \\ \hline
S20 & 8.7322 & 8.4326 & 8.4139 & 8.2763 & 8.0769 & 8.0485 & 7.9651 \\ \hline
\end{tabular}}
 \caption{Experiment results for small instances.}\label{tab:small}}
\end{center}
\end{table}

 \begin{table}[ht]
\begin{center}
{\footnotesize
    {\begin{tabular}{|c|c|c|c|c|c|}
\hline
Instance   & \eqref{eq:obm_relax_prev} + \eqref{eq:right-star_prev} & \eqref{eq:prob+J1LP}  &   Exp.\ Matching & 
 \eqref{eq:time_index_policy} & TDR Policy \cite{torrico_etal18} \\ \hline
LD1  & 99.8550 & 97.0381 & 99.8153 & 94.8957 & 94.8471 \\ \hline
LD2  & 99.8035 & 97.1728 & 99.7964 & 95.5454 & 95.0983 \\ \hline
LD3  & 99.9368 & 97.2471 & 99.8325 & 95.4772 & 95.1928 \\ \hline
LD4  & 99.9083 & 97.3543 & 99.8998 & 95.7123 & 95.4918 \\ \hline
LD5  & 99.9135 & 97.1967 & 99.9033 & 95.5889 & 95.3354 \\ \hline
LD6  & 99.9533 & 97.3301 & 99.8303 & 95.5145 & 95.2998 \\ \hline
LD7  & 99.9528 & 97.3809 & 99.9314 & 95.7929 & 95.3830 \\ \hline
LD8  & 99.9100 & 97.3675 & 99.9006 & 95.7148 & 95.3640 \\ \hline
LD9  & 99.7849 & 97.1600 & 99.7414 & 95.2710 & 95.0698 \\ \hline
LD10 & 99.8933 & 97.2112 & 99.8805 & 95.6271 & 95.2136 \\ \hline
LD11 & 99.9175 & 97.3080 & 99.8908 & 95.6526 & 95.1530 \\ \hline
LD12 & 99.8668 & 97.3086 & 99.8597 & 95.5097 & 95.5914 \\ \hline
LD13 & 99.7805 & 97.1135 & 99.6728 & 94.9733 & 94.6578 \\ \hline
LD14 & 99.8641 & 97.3257 & 99.7695 & 95.3197 & 95.4821 \\ \hline
LD15 & 99.9079 & 97.1715 & 99.8723 & 95.2706 & 95.2057 \\ \hline
LD16 & 99.7806 & 96.8779 & 99.7579 & 94.8829 & 94.6222 \\ \hline
LD17 & 99.7620 & 97.2239 & 99.7554 & 95.5676 & 95.4672 \\ \hline
LD18 & 99.9504 & 97.3627 & 99.8417 & 95.7888 & 95.5311 \\ \hline
LD19 & 99.9603 & 97.4715 & 99.9530 & 95.8706 & 95.7686 \\ \hline
LD20 & 99.9392 & 97.1932 & 99.9263 & 95.2861 & 95.1938 \\ \hline
\end{tabular}}
\caption{Experiment results for large, dense instances.}\label{tab:ld}}
\end{center}
\end{table}

 \begin{table}[ht]
\begin{center}
{\footnotesize
{\begin{tabular}{|c|c|c|c|c|c|c|}
\hline
Instance   & \eqref{eq:obm_relax_prev} + \eqref{eq:right-star_prev} & \eqref{eq:prob+J1LP} & \eqref{eq:prob+J1LP} + \eqref{eq:ineq_Jsize_2} &   Exp.\ Matching & 
\eqref{eq:time_index_policy} & TDR Policy \cite{torrico_etal18} \\ \hline
LS1  & 79.1015 & 74.7635 & 74.7260 & 72.3904 & 69.7186 & 69.0988 \\ \hline
LS2  & 79.0703 & 75.3885 & 75.3555 & 73.3026 & 70.8925 & 70.2621 \\ \hline
LS3  & 78.0157 & 74.3402 & 74.3051 & 72.0072 & 69.6943 & 69.1178 \\ \hline
LS4  & 78.6983 & 74.6555 & 74.6181 & 72.3612 & 69.7633 & 69.1302 \\ \hline
LS5  & 81.8877 & 76.7406 & 76.7085 & 74.9893 & 71.6122 & 70.7586 \\ \hline
LS6  & 81.4750 & 77.6830 & 77.6517 & 75.5756 & 72.8110 & 72.3019 \\ \hline
LS7  & 84.0754 & 79.2133 & 79.1825 & 77.4263 & 73.6374 & 72.3566 \\ \hline
LS8  & 71.9270 & 69.5745 & 69.5391 & 67.6728 & 65.7709 & 65.4557 \\ \hline
LS9  & 77.0648 & 73.8295 & 73.7947 & 71.4234 & 69.3153 & 68.5672 \\ \hline
LS10 & 82.5481 & 77.2318 & 77.2028 & 75.3500 & 72.2398 & 71.4496 \\ \hline
LS11 & 73.6532 & 70.8502 & 70.8139 & 68.2690 & 66.2972 & 64.9590 \\ \hline
LS12 & 81.1134 & 77.2298 & 77.1911 & 74.8116 & 71.9705 & 71.3263 \\ \hline
LS13 & 75.2453 & 72.2625 & 72.2297 & 70.0278 & 67.6604 & 66.7906 \\ \hline
LS14 & 80.7172 & 75.8009 & 75.7654 & 74.1015 & 70.7923 & 70.3709 \\ \hline
LS15 & 74.7313 & 72.2967 & 72.2627 & 69.7801 & 68.0195 & 67.1434 \\ \hline
LS16 & 75.9530 & 72.2955 & 72.2691 & 70.2883 & 67.5794 & 66.6571 \\ \hline
LS17 & 78.9469 & 74.2399 & 74.2093 & 72.6022 & 69.4289 & 68.6303 \\ \hline
LS18 & 79.2340 & 74.8535 & 74.8261 & 73.3324 & 70.1805 & 69.6408 \\ \hline
LS19 & 78.6156 & 75.6028 & 75.5745 & 73.7973 & 71.2080 & 70.6332 \\ \hline
LS20 & 82.8804 & 78.4299 & 78.3911 & 76.6972 & 73.1519 & 72.9224 \\ \hline
\end{tabular}}
    \caption{Experiment results for large, sparse instances.}\label{tab:ls}}
\end{center}
\end{table}


%
%
%

\end{document}